\documentclass[a4paper]{amsart}  

\usepackage{amsmath}
\usepackage{amssymb}
\usepackage{amsthm}
\usepackage{mathtools}
\usepackage{esint}
\usepackage{xspace}
\usepackage{enumitem}
\usepackage{algorithm}
\usepackage{algpseudocode}

\setlist[enumerate]{leftmargin=5ex}

\theoremstyle{plain}
\newtheorem{theorem}{Theorem}

\newtheorem{lemma}[theorem]{Lemma}

\theoremstyle{remark}
\newtheorem{remark}[theorem]{Remark}
\newtheorem{example}[theorem]{Example}

%
%


\newcommand{\N}{\mathbb{N}}
\newcommand{\R}{\mathbb{R}}
\newcommand{\Rd}{\R^d}
\newcommand{\opnint}[2]{{]}#1,#2{[}}
\newcommand{\clsint}[2]{[{#1},{#2}]}

\DeclareMathOperator{\Conv}{conv}

\DeclareMathOperator{\Vertices}{vert}



\newcommand{\norm}[2]{\left\|#1\right\|_{#2}}


\newcommand{\anc}{\mathcal{A}} 
 \newcommand{\manc}{\hat{\anc}} 
 \newcommand{\mcmA}{\hat{p}} 
\newcommand{\besterr}{\sigma}
\newcommand{\err}{e} 
\newcommand{\Err}{E} 
\newcommand{\cell}{\Delta} 
 \newcommand{\mcell}{\cell^\star} 
 \newcommand{\tcell}{K} 
\newcommand{\ind}{\mu} 
 \newcommand{\mind}{\hat{\ind}} 
 \newcommand{\auxind}{\nu} 
 \newcommand{\mauxind}{\hat{\nu}} 
\newcommand{\mesh}{M}\newcommand{\imesh}{\mesh_\text{init}} 
\newcommand{\leaves}{\mathcal{L}} 
\newcommand{\maxpatchsize}{\Theta} 
\DeclareMathOperator{\redge}{re}
\newcommand{\sdpatch}{s} 
 \newcommand{\npatches}{S} 
 \newcommand{\apatches}{S^+} 
\newcommand{\parent}{p} 
\newcommand{\Poly}{\mathbb{P}} 
\newcommand{\RootOfTree}{\Omega} 
\newcommand{\tree}{T} 
 \newcommand{\mtree}{\hat{\tree}} 
 \newcommand{\mastertree}{\tree_{\RootOfTree}} 
 \newcommand{\ctrees}{\mathbb{T}^\text{C}} 
\begin{document}
%
%
%

%
%
%
%

\title[Near-best adaptive approximation on conforming meshes]%
{Near-best adaptive approximation \\ on conforming meshes}


\author{Peter Binev}
\address{Peter Binev, Department of Mathematics, University of South Carolina, Columbia, SC 29208, USA}
\email{binev@math.sc.edu}
\urladdr{people.math.sc.edu/binev/}
\author{Francesca Fierro}
\address{Francesca Fierro, Dipartimento di Matematica,
	Universit\`a degli Studi di Milano,
	Via~Saldini 50,
	20133 Milano, Italia
}
\email{francesca.fierro@unimi.it}
\urladdr{www.mat.unimi.it/users/fierro/}
\author{Andreas Veeser}
\address{Andreas Veeser, Dipartimento di Matematica,
	Universit\`a degli Studi di Milano,
	Via~Saldini 50,
	20133 Milano, Italia.
	Member of the INdAM research group GNCS
\\
\phantom{123}\emph{ORCID:} \texttt{0000-0002-2152-2911}
\\
\phantom{123}\emph{ResearcherID:} \texttt{J-2062-2018}
}
\email{andreas.veeser@unimi.it}
\urladdr{www.mat.unimi.it/users/veeser/}

\begin{abstract}
We devise a generalization of tree approximation that generates conforming meshes, i.e., meshes with a particular structure like edge-to-edge triangulations. A key feature of this generalization is that the choices of the cells to be subdivided are affected by that particular structure. As main result, we prove near best approximation with respect to conforming meshes, independent of constants like the completion constant for newest-vertex bisection. Numerical experiments complement the theoretical results and indicate better approximation properties than previous approaches. 
\end{abstract}

\maketitle

\section{Introduction}
\label{S:Intro}
%
%
%
In approximation with adaptive meshes, one picks a cell of the current mesh in a target-function specific way and then subdivides it to generate the next mesh. This iterative process, which is accompanied by an incremental addition of degrees of freedom, is of great interest. In fact, it is underlying or even part of adaptive finite element methods, one of the most successful approaches to solve numerically partial differential equations. The mesh cells then typically have to meet in a certain manner to ensure some smoothness of the approximants or to allow for some algorithmic convenience.

In a prominent setup, the cells are triangles, subdivision is realized by newest vertex bisection, and the triangulations have to be edge-to-edge. Newest vertex bisection subdivides a given triangle with assigned newest vertex by drawing a line from the newest vertex to the midpoint of the opposite edge, which becomes the newest vertex of the two new triangles.
Note that the subdivision history and so triangulations correspond to binary trees. Edge-to-edge means that the intersection of two different triangles is either empty, a vertex, or a common edge. This property is also called ``conforming'', while here we use this less specific term for indicating a generic requirement.

In order to motivate and illustrate our results in the introduction, let us restrict ourselves to this setup described in the preceding paragraph. We write $\mathbb{T}$ and $\ctrees$ for, respectively, the set of all trees with general triangulations and its subset of trees with edge-to-edge triangulations. Moreover, if $\tree \in \mathbb{T}$, then $\Err(\tree)$ denotes a suitable error associated with the triangulation of $\tree$.

A particularly convenient way of adaptive approximation is the tree algorithm in Binev \cite[\S2]{binev_tree_2018}. This algorithm however outputs only trees in $\mathbb{T}$ with triangulations that are not necessarily edge-to-edge. We may therefore add a completion step, i.e., we pass to the smallest tree in $\ctrees$ with an edge-to-edge triangulation that contains the output of the tree algorithm. The modified output tree $\tree$ is then near best in the sense that
\begin{equation}
\label{nb-with-completion}
 \Err(\tree)
 \leq
 4 \inf \left\{ \Err(\tree') \mid
  \tree' \in \mathbb{T},\; \#\tree' \leq \tfrac{3}{4} C_\text{cmpl}^{-1}\#T
  \right\},
\end{equation}
where the constant $C_\text{cmpl}$ stems from the completion step and, in general, can be quite large. Its presence is therefore not desirable, although justified by the fact that the competing trees on the right-hand side are not necessarily edge-to-edge.

In order to avoid the presence of the completion constant $C_\text{cmpl}$, we thus shall aim for an inequality like \eqref{nb-with-completion} where $\mathbb{T}$ is replaced by $\ctrees$. We achieve this goal by devising a new tree algorithm with the following features:
\begin{itemize}
\item The generated triangulations are edge-to-edge at any stage of the algorithm.
\item The choices of the cells to be subdivided depend on the edge-to-edge requirement by separating the actual subdivision from the request for it.
\item Without the edge-to-edge requirement, the new tree algorithm reduces to the one in Binev \cite[\S2]{binev_tree_2018}.
\end{itemize}
The counterpart of \eqref{nb-with-completion} for an output tree $\tree$ of the new algorithm reads
\begin{equation*}
\Err(\tree)
\leq
4 \inf \left\{ \Err(\tree') \mid
\tree' \in \mathbb{T},\; \#\tree' \leq \tfrac{1}{2} \#\tree
\right\}.
\end{equation*}
We complement this near-best result by numerical examples. They indicate that the approximation properties of new tree algorithm are at least as good as the ones of the algorithm with completion. In certain situations, e.g., for triangulations of small cardinality or partially trivial target functions, they are even better. Furthermore, the numerical results suggest that a more relevant improvement will take place for setups with larger completion constant $C_\text{cmpl}$.

The rest of this manuscript is organized as follows. Section \ref{S:TreeApprox-for-ConfMeshes} describes the general setting of conforming meshes that is covered by our results, devises the new tree algorithm, and proves that its outputs are near best. The numerical experiments are presented in \S\ref{S:Tests}, along with a motivation of the considered setting.

\section{Tree approximation for conforming meshes}
\label{S:TreeApprox-for-ConfMeshes}
\subsection{A tree setup for conforming meshes}
\label{S:setup}
In this subsection, we specify the type of adaptivity to be considered. It is based upon \emph{trees}, i.e.\ connected acyclic undirected graphs.

Let $\Omega$ be a domain in $\Rd$, $d\in\N$. We suppose that we are given an infinite tree $\mastertree$ with \emph{root} $\RootOfTree$ such that, for every node or cell $\cell \in \mastertree$, its \emph{children} form a finite partition of $\cell$. More precisely, the children $\cell_1,\dots,\cell_J$, where $J \geq 2$ can depend on $\cell$, are open subsets of $\Rd$ and such that $\overline{\cell} = \bigcup_{j=1}^J \overline{\cell}_j$ and, for $i \neq j$, the intersection $\cell_i \cap \cell_j$ is empty. 
The replacement of $\cell$ by its children is a \emph{subdivision}. We also say that $\cell$ is the \emph{parent} of each $\cell_j$ and set $\parent(\cell_j) := \cell$ as well as $\parent(\RootOfTree) := \emptyset$. 

Let $\tree$ be a subtree of $\mastertree$. The subtree $\tree$ is called \emph{finite} whenever the number of its nodes is finite: $\#\tree < \infty$. A node that has no children in $\tree$ is a \emph{leaf} of $\tree$ and we write $\leaves(\tree)$ for the set of all its leaves. If every non-leaf node of $\tree$  
has the same number of children in $\tree$ and $\mastertree$,
then the tree $\tree$ is called \emph{full}, otherwise \emph{general}. Obviously, if a subtree $\tree \subseteq \mastertree$ is finite, full, and rooted at $\RootOfTree$, its leaves $\leaves(\tree)$ form a finite partition of $\RootOfTree$. Another useful type of subtree is the \emph{tree of all descendants} $\tree_\cell$ of a node $\cell \in \mastertree$; this tree is rooted at $\cell$, full, and,  not having any leaves, is infinite. If $\cell' \in \tree_\cell$, we all say that $\cell$ is an \emph{ancestor} of $\cell'$ and write $\cell \in \mathcal{A}(\cell')$. 

The partitions induced by full subtrees are quite general. In applications, it is often desirable to allow only for partitions with some additional structure, to which we will refer as \emph{conforming meshes}. Before giving examples, we shall introduce related notation and assumptions.

A subtree $\tree\subset\mastertree$ that is full and rooted at $\RootOfTree$ is a \emph{conforming tree} whenever its leaves form a conforming mesh. We denote by $\ctrees$ the set of all subtrees of $\mastertree$ that are rooted at $\RootOfTree$, full, and conforming.

Let $\sdpatch$ be a set of cells appearing in some conforming mesh. The subdivisions of $\sdpatch$ are \emph{isolated} whenever, for any conforming mesh containing $\sdpatch$, the replacement of all cells in $\sdpatch$ by their children leads again to a conforming mesh. If $\sdpatch$ is minimal (in the sense that there is no proper subset with isolated subdivisions), we call it a \emph{subdivision patch}. We assume 
\begin{equation}
\label{sdpatch-for-cell}
\begin{aligned}
 &\text{for any cell $\cell \in \mastertree$,}
\\
 &\quad \text{there is a unique subdivision patch $\sdpatch(\cell)$ with $\cell \in \sdpatch(\cell)$}.
\end{aligned}
\end{equation}
The uniqueness entails that, if $\sdpatch$ is a subdivision patch, then
\begin{equation}
\label{patch-and-members}
 \cell,\cell' \in \sdpatch
 \iff
 \sdpatch(\cell) = s = \sdpatch(\cell').
\end{equation}

If $\cell \in \leaves(\tree)$ is a leaf of a conforming tree $\tree\in\ctrees$, its subdivision patch might not satisfy $\sdpatch(\cell) \subseteq \leaves(\tree)$. If not, $\cell$ cannot be immediately subdivided in $\leaves(\tree)$; its subdivision has to be prepared by other ones. A subdivision patch $\sdpatch$ is \emph{necessary} for subdividing the cell $\cell$ whenever
\begin{equation}
\label{locating-subdivision-patches}
 \forall \tree \in \ctrees
\quad
 \cell \in \tree \setminus \leaves(\tree)
 \implies
 \sdpatch \subseteq \tree \setminus \leaves(\tree).
\end{equation} 
In particular, $\sdpatch(\cell)$ is necessary for the subdivision of $\cell$ by its minimality. We assume that there always exists at least one necessary subdivision patch:
\begin{equation}
\label{existence-necessary-subdivison-patches}
\begin{aligned}
 &\text{for any leaf $\cell \in \leaves(\tree)$ of a conforming tree $\tree \in \ctrees$,}
\\
 &\quad\text{there is a subdivision patch $s \subseteq \leaves(\tree)$ that is necessary for subdividing $\cell$}.
\end{aligned}
\end{equation}
This assumption ensures that any conforming mesh can be constructed by successively subdividing necessary patches. 

In our results below, we measure the size of a conforming tree $\tree \in \ctrees$ by
\begin{equation}
\label{complexity-measure}
|\tree|
:=
\# \{ s \subseteq \tree \setminus \leaves(\tree) \mid
s \text{ is a subdivision patch} \}
\end{equation}
and employ the maximal cardinality
\begin{equation}
\label{maxpatchsize}
\maxpatchsize
:=
\max_{\cell \in \mastertree} \#\sdpatch(\cell) \geq 1
\end{equation}
of a subdivision patch.

Let us now consider two 
examples, verifying the two assumptions \eqref{sdpatch-for-cell} and \eqref{existence-necessary-subdivison-patches}. For the sake of simplicity, we shall suppose here that there is only one type of cells in each example and that the domain $\Omega$ is an instance thereof. As long as the master tree is well-defined, examples involving various cell types can be handled quite similarly. More general domains and initial meshes with several cells are covered by Remark \ref{R:arb-init-mesh} below. The first example will serve us as a reference, while the second one constitutes our running example. 

\begin{example}[Nonconforming abstract bisection]
\label{E:nonconf-abstract-bisection}
Let $\mastertree$ be a binary tree and take $\ctrees$ to be the set of all subtrees that are full and rooted at $\RootOfTree$, i.e., there is no requirement of an additional structure for the meshes. This is the setup considered in Binev \cite[\S2]{binev_tree_2018}. Obviously, the subdivision patches are $\sdpatch(\cell) = \{\cell\}$, whence
\begin{subequations}
\label{setting-nonconf-abstract-bisection}
\begin{equation}
 \maxpatchsize=1
\end{equation}
and \eqref{sdpatch-for-cell} and \eqref{existence-necessary-subdivison-patches} are verified. Furthermore, for any $\tree \in \ctrees$, we have
\begin{equation}
 |\tree| = \#\big( \tree \setminus \leaves(\tree) \big) = \#\leaves(\tree) - 1,
\end{equation}
\end{subequations}
where the second inequalities follows from the fact that $\tree$ is a full binary tree.
\end{example}

\begin{example}[Newest vertex bisection for edge-to-edge triangulations]
\label{E:nvb-triangulations}
%
Given a triangle $\Omega\subset\R^2$, let $\mastertree$ be the infinite binary tree created by newest vertex bisection of triangles; cf., e.g., \cite{mitchell_30_2016}. In order to recall this subdivision method, we associate to a triangle $\cell$ an ordering of its vertices (or: extreme points) $\Vertices\cell := \{a_0,a_1,a_2\}$. The ordering $(a_0,a_1,a_2)$, which has to be assigned for the root triangle, is such that $a_1$ corresponds to the so-called newest vertex and $\redge(\cell) := \Conv\{a_0,a_2\}$ is the refinement edge. The two children of $\cell$ are then given by $(a_0,a,a_1)$ and $(a_2,a,a_1)$, where the midpoint $a = \tfrac{1}{2}(a_0+a_2)$ of the refinement edge $\redge(\cell)$ is the respective newest vertex.

Let $\tree \subseteq \mastertree$ be a tree that is full and rooted at $\RootOfTree$. We say that the mesh induced by its leaves or $\tree$ itself is \emph{edge-to-edge} whenever
\begin{equation*}
 \forall \cell_1,\cell_2 \in \leaves(\tree)
\quad
 \Vertices(\cell_1 \cap \cell_2) = \Vertices \cell_1 \cap \Vertices \cell_2
\end{equation*}
and take
\begin{equation*}
 \ctrees := \{ \tree \subseteq \mastertree \mid \tree \text{ is full, rooted at $\RootOfTree$, and edge-to-edge} \}.
\end{equation*}
Edge-to-edge meshes are convenient, e.g., in the construction of finite element bases that have a certain smoothness.

In order to verify assumptions \eqref{sdpatch-for-cell} and \eqref{existence-necessary-subdivison-patches}, let us first determine the possible configurations of a subdivision patch $\sdpatch$ in the given setup. Let $\cell \in \sdpatch$ be a cell of the subdivision patch $\sdpatch$. If its refinement edge $\redge(\cell) \subset \partial\Omega$ belongs to the boundary, then $\sdpatch = \{ \cell \}$, else there is another cell $\cell' \in \mastertree$ such that $\redge(\cell) = \redge(\cell')$ and $s = \{ \cell, \cell' \}$; see Binev, Dahmen, and DeVore \cite[Lemma~2.2~(ii)]{binev_adaptive_2004}.
Hence, \eqref{sdpatch-for-cell} is verified and we have $\maxpatchsize = 2$. Assumption \eqref{existence-necessary-subdivison-patches} follows from \cite[Lemma~2.2~(iii)]{binev_adaptive_2004}.
\end{example}

%

We next introduce our error notion. To this end, we associate a \emph{local error} $\err(\cell)\geq0$ with each cell $\cell \in \mastertree$ and, for any conforming tree $\tree \in \ctrees$, we define its \emph{global error} in an additive way by 
\begin{equation*}
 \Err(\tree)
 :=
 \sum_{\cell \in \leaves(\tree)} \err(\cell).
\end{equation*}
Examples for error functionals $\err$ will be presented in \S\ref{S:Tests}.

Given a budget of subdividing $n$ patches, the \emph{best (global) error} is
\begin{equation*}
 \besterr_n
 :=
 \min_{\tree \in \ctrees_n} \Err(\tree)
\end{equation*}
with $\ctrees_n := \{ \tree \in \ctrees \mid |\tree| \leq n \}$. We say that the trees constructed by some algorithm are \emph{near best} with respect $|\cdot|$ whenever there exist constants $C_1, C_2>0$ such that, for every output tree $\tree$,  
\begin{equation}
\label{def-near-best}
 \Err(\tree)
 \leq
 C_1 \besterr_{C_2|\tree|}.
\end{equation}

In order to construct near-best trees without investigating the global error functionals of all competing trees, the following assumption\ 
on the local errors will be instrumental. The local errors are \emph{subadditive} if 
\begin{equation}
\label{subadditivity}
 \sum_{j=1}^J \err(\cell_j)
 \leq
 \err(\cell)
\end{equation}
whenever $\cell_1, \dots,\cell_J$ are the children of a cell $\cell$. 

\subsection{Tree algorithm for conforming meshes}
\label{S:TreeAlgo}
%
%

A greedy algorithm on the local errors does not construct near-best meshes. This is illustrated by Example~4 in Veeser~\cite{veeser_adaptive_2018}, which considers bisection of intervals and local errors arising from a scalar product. Roughly speaking, due to a suitable orthogonality of the target function, an arbitrary number of subdivisions of the cells with the maximal local error yields no error reduction at all, while a relevant reduction could be obtained by subdividing other cells.

In order to remedy, Binev and DeVore \cite{binev_fast_2004} propose applying a greedy algorithm to ``modified local errors''. These quantities depend not only on the local error of the cell at hand but also on the local errors of its ancestors. We shall refer to them as \emph{marking indicators} and denote by $\ind(\cell)$ the marking indicator associated with the cell $\cell$. In the context of abstract nonconforming bisection (cf.\ Example \ref{E:nonconf-abstract-bisection}), a particularly convenient marking indicator is conceived in Binev~\cite{binev_tree_2018}. If $\err(\cell)>0$, it records the subdivision generating the cell $\cell$ in the following manner:
\begin{equation}
\label{recording-nonconforming-subdivisions}
 \frac{\err(\cell)}{\ind(\cell)}
 =
 1 + \frac{\err(\cell)}{\ind\big( \parent(\cell) \big)},
\end{equation}
i.e., the ratio of local error to marking indicator is increased by $1$ with respect to the value $\ind\big(\parent(\cell)\big)$ of the parent of $\cell$. This enlargement of the ratio constitutes a penalization for $\ind(\cell)$ being large and, in particular, enforces some reduction of the marking indicator when passing form parent to child. In fact, an equivalent form of the identity \eqref{recording-nonconforming-subdivisions} is 
\begin{equation}
\label{nonconforming-recursion}
 \ind(\cell)
 =
 \left(
  \frac{1}{\err(\cell)} + \frac{1}{\ind\big( \parent(\cell) \big)}
 \right)^{-1}.
\end{equation}
This recursion reveals that $\mu$ has a double meaning: on the one hand, $\ind(\cell)$ is the marking indicator replacing the local error $\err(\cell)$ and, on the other hand, $\ind(\parent(\cell))^{-1}$ can be viewed as the penalization for $\cell$ arising from previous subdivisions.

Before proposing a generalization of recursion \eqref{nonconforming-recursion}, we note two differences of conforming meshes from nonconforming ones. First, subdivision is organized in cell patches instead of single cells. Second, a cell $\mcell$ that is \emph{marked}, i.e.\ should be subdivided, might not be ready for subdivision since its patch $\sdpatch(\mcell)$ might not be contained in the current mesh. We can take these two aspects into account by recording subdivisions of patches instead of cells and by penalizing the request of patch subdivisions and not their realization. More precisely, we additionally introduce the \emph{``penalization indicator''} $\auxind(\cell)$ of a cell $\cell$. Then, if the cells of a patch have just been subdivided at the request of the marked cell $\mcell$, we assign new indicators by distinguishing the following three cases:
\begin{itemize}
\item If a cell $\cell$ is generated by the subdivision of the marked cell $\mcell$, we assign
\begin{equation*}
 \auxind(\cell) \gets \ind\big( \parent(\cell) \big)^{-1}
\quad\text{and}\quad
 \ind(\cell)
 \gets
 \left(
  \frac{1}{\err(\cell)} + \auxind(\cell)
 \right)^{-1}.
\end{equation*}
This corresponds exactly to \eqref{nonconforming-recursion}.
\item If a cell $\cell$ is generated, but not by subdividing the marked cell $\mcell$, we assign
\begin{equation*}
\label{conforming-recursion;for-conformity}
 \auxind(\cell) \gets \auxind\big( \parent(\cell) \big)
\quad\text{and}\quad
 \ind(\cell)
 \gets
 \left(
  \frac{1}{\err(\cell)} + \auxind(\cell)
 \right)^{-1}.
\end{equation*}
This does not modify the penalization and so does not record the subdivision of $\parent(\cell)$ for its descendants. However, the marking indicator of $\cell$ is different from the one of its parent whenever this hold for their local errors.
\item If the marked cell $\mcell$ is not subdivided, then we \emph{update} its marking and penalization indicators as follows:
\begin{equation*}
\label{conforming-recursion;elm-with-max-mind}
 \auxind(\mcell) \gets \ind( \mcell)^{-1}
\quad\text{and}\quad
 \ind(\mcell)
 \gets
 \left(
  \frac{1}{\err(\mcell)} + \auxind(\mcell)
 \right)^{-1}.
\end{equation*}
This corresponds to \eqref{nonconforming-recursion}, where $\mcell$ is its own parent.
\end{itemize}
In this way only the penalization indicators of the marked cell $\mcell$ or its descendants record one patch subdivision, irrespective of the number of subdivided cells and whether $\mcell$ was actually subdivided. 

The recursive assignments for the penalization indicator can be initialized by
$
 \auxind(\Omega) \gets 0.
$ 
If $\err(\cell) = 0$, we still use the above assignments, with the conventions $1/0 = \infty$, $r+\infty=\infty$ whenever $r\in\R\cup\{\infty\}$, and $1/\infty = 0$.

The preceding discussion leads to the pseudocode of Algorithm~\ref{A:conf-tree-algo}.
\begin{algorithm}
	\caption{Tree algorithm for conforming meshes}	
	\label{A:conf-tree-algo}
	\begin{algorithmic}[1]
		\Statex
		\Require master tree $\mastertree$ with local error functional $\err$
		\Statex
		\Statex \Comment{initialization}
		
		\State $\tree_0 \gets \{\Omega\}$, $\auxind(\Omega) \gets 0$, $\ind(\Omega) \gets \err(\Omega)$, $n \gets 0$
		\Statex
		\While {$t_n := \max_{\cell\in\leaves(\tree_n)} \ind(\cell)>0$}
		\label{max>0}
		\Comment{or some alternative stopping test}
		\State mark some $\mcell_n \in \leaves(\tree_n)$ with $\ind(\mcell_n) = t_n$
		\State pick a subdivision patch $\sdpatch_n \subseteq \leaves(\tree_n)$ that is necessary for subdividing $\mcell_n$
		\State define $\tree_{n+1}$ by adding all children $C_n$ of the cells in $\sdpatch_n$ to $\tree_n$
		\Statex
		\Statex \Comment{penalize only the marking indicators of the descendants of $\mcell_n$}
		\State $\auxind(\mcell_n) \gets
		 \displaystyle
		 \frac{1}{\err(\mcell_n)} + \auxind(\mcell_n)$
		\label{dynamic;1}
		\If {$\mcell_n \not\in \sdpatch_n$}
		\Comment{$\mcell_n$ remains a leaf}
		\State $\ind(\mcell_n) \gets \left( \displaystyle
		\frac{1}{\err(\mcell_n)} + \auxind(\mcell_n)
		\right)^{-1}$
		\EndIf
		\ForAll {$\cell \in C_n$}
		\State $\auxind(\cell) \gets \auxind\big(\parent(\cell)\big)$
		\State $\ind(\cell) \gets \left( \displaystyle
		\frac{1}{\err(\cell)} + \auxind(\cell)
		\right)^{-1}$
		\EndFor
		\State $n \gets n+1$
		\EndWhile
	\end{algorithmic}
\end{algorithm}
Three comments about it are in order. The last two are immediate consequences of the derivation of the marking and penalization indicators.
\begin{itemize}
\item Since  in each iteration the children of the cells in exactly one subdivision patch are subdivided, iteration counter and complexity measure coincide:
\begin{equation}
\label{iteration=complexity}
 \text{for all $n$, we have $|\tree_n| = n$.}
\end{equation} 
\item Algorithm~\ref{A:conf-tree-algo} specializes to the (unrestricted) tree algorithm in Binev~\cite[\S2]{binev_tree_2018} whenever $\sdpatch_n = \{ \mcell_n \}$ for all $n$, i.e.\ in particular for the setting of abstract nonconforming bisection in Example~\ref{E:nonconf-abstract-bisection}.
\item A marking indicator $\ind(\cell)$ does not depend only on the cell $\cell$ but also on the iteration number $n$. This is due to the above third case or 
line \ref{dynamic;1}, and it is an important difference from Binev and DeVore \cite{binev_fast_2004} and Binev~\cite[\S2]{binev_tree_2018}, where the marking indicator depends solely on $\cell$. 
\end{itemize}

In order to track the dependence on the iteration number $n$ carefully, let us incorporate it in our notation. If $\cell \in \leaves(\tree_n)$ is a mesh cell at iteration $n$, we write $\ind(\cell,n)$ and $\auxind(\cell,n)$ for the corresponding values of  $\ind(\cell)$ and $\auxind(\cell)$ available at iteration $n$ when executing line \ref{max>0}. Furthermore, if Algorithm~\ref{A:conf-tree-algo} terminates, then we let $N^*$ denote the last of iteration counter $n$ in line 2, else we set $N^* := \infty$. For the penalization, we then have the initialization
\begin{subequations}
\label{ind-with-ell}
\begin{equation}
\label{aux-ind;init}
 \auxind(\Omega,0) = 0,
\end{equation}
and, for all $n<N^*$ and $\cell\in\leaves(\tree_{n+1})$, the recursion
\begin{equation}
\label{aux-ind;recursion}
 \auxind(\cell,n+1)
 =
 \begin{cases}
  \displaystyle
   \frac{1}{\err(\mcell_n)}
   +
   \auxind(\mcell_n,n)
  &\text{if $\cell\not\in\leaves(\tree_{n})$ and $\parent(\cell) = \mcell_n$},
 \\[2ex]
  \auxind\big( \parent(\cell),n \big)
  &\text{if $\cell\not\in\leaves(\tree_{n})$ and $\parent(\cell) \neq \mcell_n$,}
 \\[2ex]
  \displaystyle
  \frac{1}{\err(\mcell_n)}
  +
  \auxind(\mcell_n,n)
  &\text{if $\cell\in\leaves(\tree_{n})$ and $\cell = \mcell_n$,}
 \\[2ex]
  \auxind(\cell,n)
  &\text{if $\cell\in\leaves(\tree_{n})$ and $\cell \neq \mcell_n$}.
 \end{cases}
\end{equation}
The marking indicators are simply given by
\begin{equation}
\label{marking-with-ell}
 \ind(\cell,n)
 =
 \left(
  \frac{1}{\err(\cell)}
  +
  \auxind(\cell,n)
 \right)^{-1}
\end{equation}
\end{subequations}
for all $n\leq N^*$ and $\cell\in\leaves(\tree_{n})$.

These relations and line \ref{max>0} of Algorithm \ref{A:conf-tree-algo} imply the following two properties of the indicators of a ``tagged'' cell $(\cell,n)$ with $\cell \in \leaves(\tree_n)$ and $n\leq N^*$: First, we have the strict inequality
\begin{equation}
\label{auxind>0}
 \auxind(\cell,n) > 0
\end{equation}
whenever $1\leq n \leq N^*$. Second, without restriction on $n$, we have
\begin{equation}
\label{err,ind=0}
 \ind(\cell,n) = 0 \iff \err(\cell) = 0. 
\end{equation}
This equivalence has an interesting consequence. If there exists a tree $\tree \in \ctrees$ with $\Err(\tree) = 0$, then Algorithm~\ref{A:conf-tree-algo} terminates with a subtree: $\tree_{N^*} \subseteq \tree$. This is a necessary condition that Algorithm~\ref{A:conf-tree-algo} generates near-best meshes with $C_2=1$.
\subsection{Near-best approximation for subadditive local errors}
\label{S:nba-subadditivity}
The goal of this section is to analyze the approximation properties of Algorithm~\ref{A:conf-tree-algo} under the assumption that the local errors are subadditive.


%
\begin{theorem}[Near-best approximation under subadditivity]
\label{T:nba-subadditivity}
Let $\tree_N$ be the tree generated by Algorithm~\ref{A:conf-tree-algo} at iteration $N$. If the local errors are subadditive, then the error of $\tree_N$ satisfies
\begin{equation*}
 \Err(\tree_N)
 \leq
 \min_{n=0}^N \left(
  \frac{N+1+(\maxpatchsize-1)n}{N-n+1} \sigma_n
 \right)
\end{equation*} 
with $\maxpatchsize$ from \eqref{maxpatchsize}. Consequently, the output trees of Algorithm~\ref{A:conf-tree-algo} are near best with respect to $|\cdot|$ from \eqref{complexity-measure}.
\end{theorem}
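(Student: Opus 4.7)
The proof adapts Binev's near-best analysis of the unrestricted tree algorithm \cite{binev_tree_2018} to the conforming setting. Two new features must be accommodated: the separation of marking from actual subdivision (captured by the distinct indicators $\ind$ and $\auxind$) and the fact that up to $\maxpatchsize$ cells may be subdivided per iteration.

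I would first establish three structural invariants of the algorithm. \emph{Monotonicity:} the maximum $t_n := \max_{\cell \in \leaves(\tree_n)} \ind(\cell,n)$ is non-increasing in $n$, because every case of \eqref{aux-ind;recursion} preserves or increases $\auxind$ and so, via \eqref{marking-with-ell}, decreases $\ind$ along the cells that become leaves of $\tree_{n+1}$. \emph{Greedy bound:} rewriting $\ind(\cell, n) \leq t_n$ via \eqref{marking-with-ell} gives
\begin{equation*}
\err(\cell) \leq t_n \bigl( 1 + \err(\cell)\auxind(\cell,n) \bigr) \qquad \text{for every } \cell \in \leaves(\tree_n).
\end{equation*}
\emph{Penalization budget:} induction on $n$ yields $\Psi_n := \sum_{\cell \in \leaves(\tree_n)} \err(\cell)\auxind(\cell,n) \leq n$. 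The critical observation is that, at each iteration, the marked cell $\mcell_n$ contributes exactly $+1$ (the penalization increment $1/\err(\mcell_n)$ multiplies $\err(\mcell_n)$ itself if $\mcell_n$ is not subdivided, or the sum of its children's errors, which is bounded by $\err(\mcell_n)$ via subadditivity \eqref{subadditivity}), whereas the other patch cells in $\sdpatch_n$ contribute at most $0$, again by subadditivity.

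Next, for a competing tree $\tree^* \in \ctrees$ with $|\tree^*| = n$ and $\Err(\tree^*) = \sigma_n$, I partition $\leaves(\tree_N) = L_1 \sqcup L_2 \sqcup L_3$, where $L_1 = \leaves(\tree_N) \cap \leaves(\tree^*)$, $L_2 = \leaves(\tree_N) \cap (\tree^* \setminus \leaves(\tree^*))$, and $L_3 = \leaves(\tree_N) \setminus \tree^*$. Subadditivity gives $\sum_{L_1 \cup L_3}\err(\cell) \leq \sigma_n$, since each leaf of $\tree^*$ is the unique ancestor in $\tree^*$ of all the $\tree_N$-descendants it contains, and the counting $|L_2| \leq n\maxpatchsize$ holds because $L_2 \subseteq \tree^* \setminus \leaves(\tree^*)$ consists of at most $n$ subdivision patches, each of size at most $\maxpatchsize$.

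The main obstacle is to combine these ingredients so as to eliminate $t_N$ with the sharp coefficient. My plan is to apply the greedy bound twice: once on $L_2$, yielding $\sum_{L_2}\err(\cell) \leq t_N(|L_2| + \Psi_N^{L_2})$, and once more via the monotonicity of $t_k$ at each intermediate iteration $k \in \{n, \ldots, N\}$, producing $N-n+1$ further inequalities $t_k \geq t_N$ that telescope against the penalization budget. The algebraic combination has to be carried out so as to produce exactly the ratio $\frac{N+1+(\maxpatchsize-1)n}{N-n+1}$, the $(\maxpatchsize-1)n$ overhead in the numerator being traceable to the extra patch cells of $L_2$ beyond the nonconforming count, and the denominator $N-n+1$ to the range of intermediate iterations dominated by $t_N$. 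Once this inequality is established, taking the minimum over $n = 0, \dots, N$ completes the main estimate, and the near-best property \eqref{def-near-best} with $C_1 = 4$ follows by selecting $n$ proportional to $N$ (the ratio of proportionality depending only on $\maxpatchsize$).
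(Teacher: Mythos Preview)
Your preliminary invariants are correct and useful: the monotonicity of $(t_n)$ matches Lemma~\ref{L:decreasing-max-ind}, the greedy rewrite $\err(\cell)\le t_n\bigl(1+\err(\cell)\,\auxind(\cell,n)\bigr)$ is valid, and the global penalization budget $\Psi_n:=\sum_{\cell\in\leaves(\tree_n)}\err(\cell)\,\auxind(\cell,n)\le n$ does hold by the inductive argument you sketch. The partition of $\leaves(\tree_N)$ into $L_1,L_2,L_3$ with $\sum_{L_1\cup L_3}\err\le\sigma_n$ and $|L_2|\le\maxpatchsize n$ is also exactly what the paper does.

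The gap is in the ``main obstacle'' paragraph. To obtain the stated ratio you need two \emph{localized} bounds, and the global budget $\Psi_N\le N$ provides neither. First, for the upper bound you need $\sum_{L_2}\err\le \maxpatchsize n\,t_N$, but your greedy inequality gives only $\sum_{L_2}\err\le t_N\bigl(|L_2|+\Psi_N^{L_2}\bigr)$, and $\Psi_N^{L_2}$ can be as large as $N$: nothing in your argument forces the penalization accumulated on $L_2$ to be controlled by $n$ rather than $N$. Second, and more seriously, you need the lower bound $\sigma_n\ge (N-n+1)\,t_N$. Your suggestion to ``telescope $t_k\ge t_N$ for $k\in\{n,\dots,N\}$ against the penalization budget'' does not produce this: the $N-n+1$ iterations with marked cells outside $\tree_n^\star\setminus\leaves(\tree_n^\star)$ may all land in a single leaf $\cell^\star$ of $\tree_n^\star$, so knowing $\err(\mcell_k)\ge t_N$ for each of them individually yields only $\err(\cell^\star)\ge t_N$, not $\err(\cell^\star)\ge (N-n+1)\,t_N$.

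What is actually required---and what the paper supplies---is a \emph{reduced tree} $\mtree_N$ built from the marked cells, together with two tree-induction lemmas on it. Lemma~\ref{L:lbd} shows $\err(\cell)\ge \#\npatches(\mtree)\,t_N$ for any marked $\cell$, where $\mtree$ is the subtree of $\mtree_N$ below $\cell$; this is precisely the amplification that turns ``many markings inside one $\cell^\star$'' into a proportionally large error, and its proof is a nontrivial induction exploiting the recursion~\eqref{mauxind-recursion}. Lemma~\ref{L:ubd} is the localized counterpart of your budget, bounding $\sum_{L_2}\err$ by $t_N$ times the number of subdivision patches lying in $\tree_n^\star\setminus\leaves(\tree_n^\star)$, hence by $\maxpatchsize n\,t_N$. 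Neither follows from $\Psi_N\le N$; both hinge on unrolling $\mauxind$ along chains of \emph{marked} ancestors and then reordering via subadditivity on $\mtree_N$.
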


\begin{remark}[Abstract nonconforming bisection]
Consider the special case of nonconforming abstract bisection as described in Example \ref{E:nonconf-abstract-bisection}. In view of \eqref{setting-nonconf-abstract-bisection} and \eqref{iteration=complexity}, we have
\begin{equation*}
 \frac{N+1+(\maxpatchsize-1)n}{N-n+1}
 =
 \frac{ \#\leaves(\tree_N) }{ \#\leaves(\tree_N) - n}
\end{equation*}
and, setting $\tilde{N} := \#\leaves(\tree_N)$, the inequality in Theorem~\ref{T:nba-subadditivity} becomes
\begin{equation*}
 \Err(\tree_N)
 \leq
 \min_{m=1}^{\tilde{N}} \left(
 \frac{\tilde{N}}{\tilde{N}-m+1} \sigma_{m-1}
\right)
\end{equation*}
This is exactly Theorem~2.1 in Binev~\cite{binev_tree_2018}, which uses the number of leaves 
as complexity measure.
\end{remark}

We split the proof of Theorem~\ref{T:nba-subadditivity} in several steps. The first one concerns the behavior of the maximal values of the marking indicators arising in line 2 of Algorithm~\ref{A:conf-tree-algo}.
\begin{lemma}[Maximal marking indicators]
\label{L:decreasing-max-ind}
The sequence $(t_n)_{n=0}^N$ of the maximal values of the marking indicators is decreasing.
\end{lemma}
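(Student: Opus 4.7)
The plan is to verify $\ind(\cell,n+1) \leq t_n$ for every leaf $\cell \in \leaves(\tree_{n+1})$; taking the maximum over such $\cell$ will then yield $t_{n+1} \leq t_n$. By \eqref{marking-with-ell}, each such inequality is equivalent to
\[
 \frac{1}{\err(\cell)} + \auxind(\cell,n+1)
 \;\geq\;
 \frac{1}{\err(\mcell_n)} + \auxind(\mcell_n,n),
\]
so I would organize the verification as a case analysis on $\cell$, following the four branches of the recursion \eqref{aux-ind;recursion}.

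First, for any leaf $\cell \in \leaves(\tree_n)\setminus\{\mcell_n\}$ that is not subdivided at step $n$, both $\err(\cell)$ and $\auxind(\cell,\cdot)$ are unchanged, so $\ind(\cell,n+1) = \ind(\cell,n) \leq t_n$ by the choice of $\mcell_n$ as a maximizer in line~2. Second, when $\mcell_n \not\in \sdpatch_n$, the cell $\mcell_n$ remains a leaf and line~\ref{dynamic;1} strictly enlarges $\auxind(\mcell_n,\cdot)$, which yields $\ind(\mcell_n,n+1) < \ind(\mcell_n,n) = t_n$ (using that $\err(\mcell_n)>0$ by \eqref{err,ind=0}, since otherwise the while loop would have terminated).

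The two remaining cases concern the newly generated leaves. If $\cell$ is a child of $\mcell_n$, then $\auxind(\cell,n+1) = \err(\mcell_n)^{-1} + \auxind(\mcell_n,n)$, so the target inequality reduces to $1/\err(\cell)\geq 0$ and holds unconditionally. If instead $\cell$ is a child of some $\cell' \in \sdpatch_n\setminus\{\mcell_n\}$, then $\auxind(\cell,n+1) = \auxind(\cell',n)$, and the target becomes
\[
 \frac{1}{\err(\cell)} + \auxind(\cell',n)
 \;\geq\;
 \frac{1}{\err(\mcell_n)} + \auxind(\mcell_n,n).
\]
Maximality of $t_n = \ind(\mcell_n,n)$ applied to $\cell' \in \leaves(\tree_n)$ gives $\err(\cell')^{-1} + \auxind(\cell',n) \geq \err(\mcell_n)^{-1} + \auxind(\mcell_n,n)$ via \eqref{marking-with-ell}, so it suffices to establish $\err(\cell) \leq \err(\cell')$, which is precisely subadditivity \eqref{subadditivity} applied to the children of $\cell'$.

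The main obstacle I anticipate is this last case: it is the only branch where the new leaf inherits its penalization from a cell different from the marked maximizer, so one must combine the maximality of $\ind(\mcell_n,n)$ over $\leaves(\tree_n)$ with subadditivity to transfer the bound from $\err(\cell')$ down to $\err(\cell)$. This is also the place where the subadditivity hypothesis of Theorem~\ref{T:nba-subadditivity} is actually used in the lemma; the first three cases are bookkeeping, but one must keep the effect of line~\ref{dynamic;1} consistent with the four branches of \eqref{aux-ind;recursion}.
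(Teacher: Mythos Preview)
Your proof is correct and follows essentially the same approach as the paper: both show that each leaf's marking indicator at step $n+1$ is bounded by that of some leaf at step $n$, hence by $t_n$. The only difference is presentational---the paper introduces the ``dynamic parent'' $K'=(\cell',n)$ with $\cell'\in\{\cell,\parent(\cell)\}$ and verifies the two inequalities $\err(\cell)\leq\err(\cell')$ and $\auxind(\cell,n+1)\geq\auxind(\cell',n)$ once, thereby collapsing your four cases into one monotonicity statement $\ind(\cell,n+1)\leq\ind(\cell',n)\leq t_n$.
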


\begin{proof}
We first show a monotonicity property of the marking indicator. To this end, let $\tcell=(\cell,n+1)$ be a tagged cell with $\cell \in \leaves(\tree_{n+1})$ and $n<N^*$ and denote by $\tcell'$ its ``dynamic'' parent, i.e., 
$\tcell'=(\cell',n)$ with either $\cell'=\cell$ or $\cell'=\parent(\cell)$. The subadditivity \eqref{subadditivity} for the local errors and the recursion \eqref{aux-ind;recursion} for the penalization indicators yield
\begin{equation*}
\err(\cell) \leq \err(\cell')
\quad\text{and}\quad
\auxind(\tcell) \geq \auxind(\tcell').
\end{equation*}
The definition \eqref{marking-with-ell} of the marking indicator thus implies
\begin{equation*}
\ind(\tcell)
=
\left(
\frac{1}{\err(\cell)}
+
\auxind(\tcell)
\right)^{-1}
\leq
\left(
\frac{1}{\err(\cell')}
+
\auxind(\tcell')
\right)^{-1}
=
\ind(\tcell').
\end{equation*}
Hence, choosing $\cell = \mcell_{n+1}$, we see that the sequence $(t_n)_n$ is decreasing:
\begin{equation*}
t_{n+1}
=
\ind(\tcell)
\leq
\ind(\tcell')
\leq
t_n.
\qedhere
\end{equation*}
\end{proof}

Our next step summarizes the operations of Algorithm~\ref{A:conf-tree-algo} by defining suitable quantities on the nodes of the tree $\tree_N$. Our starting point is that the set of internal nodes of $\tree_N$ is exactly the union of the selected subdivision patches:
\begin{equation}
\label{all-Internals-dynamic}
 \tree_N \setminus \leaves(\tree_N)
 =
 \bigcup_{\sdpatch\in\npatches} \sdpatch
\quad\text{with}\quad
 \npatches
 :=
 \{\sdpatch_0,\dots, \sdpatch_{N-1}\}.
\end{equation}
Note that 
the subdivision patches $\sdpatch$, $\sdpatch\in\npatches$, are pairwise disjoint. We can group the subdivision patches with respect to the marked cells by introducing the sets
\begin{align*}
 \npatches(\cell)
 :=
 \big\{ \sdpatch_n \mid
  \mcell_n = \cell
  \text{ for some } n \in \{0,\dots,N-1\}
 \big\} 
\end{align*}
of all patches that were subdivided in connection with a given marking cell $\cell\in\tree_N$. Line~4 of Algorithm~\ref{A:conf-tree-algo} ensures that all patches in $\npatches(\cell)$ are necessary for subdividing $\cell$. In addition, we clearly have
\begin{equation*}
 \npatches(\cell) \neq \emptyset
 \iff
 \cell \text{ was marked}
\end{equation*}
and, if nonempty, the subdivision patch $\sdpatch(\cell)$ may or may not be an element of $\npatches(\cell)$.

The fact that the penalization $\auxind$ is only modified for marked cells suggests introducing the \emph{reduced tree} 
\begin{equation}
\label{reduced-tree}
\mtree_N
:=
\big\{ \mcell_0, \mcell_1, \dots, \mcell_{N-1} \big\}
\cup
\leaves(\tree_N),
\end{equation}
where
\begin{equation*}
\text{$\cell'$ is parent of $\cell$ in $\mtree_N$}
\;:\Longleftrightarrow\;
\cell' = \mcmA(\cell)
\end{equation*}
and
\begin{equation}
\label{reduced-parent}
\mcmA(\cell)
:=
\mcell_n
\quad\text{with}\quad
n := \max\big\{ k \mid \mcell_k \in \anc(\cell) \big\}
\end{equation}
is the closest marked ancestor of $\cell$.
%
If $\cell' = \mcmA(\cell)$ for $\cell,\cell'\in\mtree_N$, we also say that $\cell$ is a child of $\cell'$ in $\mtree_N$. Note that then $\cell\subset\cell'$ and, in general, $\cell$ is only a descendant of $\cell'$ (in $T_N$). The number of children of a given node in $\mtree_N$ can vary from $1$ to some maximum number that grows with $N$.  Thus, although the tree $\mtree_N$ shares root and leaves with $\tree_N$, its structure is less rigid than the one of $\tree_N$.
Nevertheless, if $\mtree$ is any subtree of $\mtree_N$ rooted at $\cell$, then 
\begin{equation}
\label{subadditivity-mtree}
 \sum_{\cell'\in\leaves(\mtree)} \err(\cell')
 \leq
 \err(\cell).
\end{equation}
This follows from an induction on the minimal general subtree of $\tree_N$ containing $\mtree$.  

Next, we define ``static'' variants of the ``dynamic'' indicators $\ind$ and $\auxind$.
Given a node $\cell \in \tree_N$, we let
\begin{align*}
 n_{\min}(\cell)
 &:=
 \min \big\{
 n \in \{1,\dots,N\} \mid \cell \in \leaves(\tree_n)
 \big\},
\\
 n_{\max}(\cell)
 &:=
 \max \big
 \{ n \in \{1,\dots,N\} \mid \cell \in \leaves(\tree_n)
 \big\},
\end{align*} 
and set
\begin{equation}
\label{mind}
 \mind(\cell)
 =
 \ind\big( \cell, n_{\max}(\cell) \big)
\quad\text{and}\quad
 \mauxind(\cell)
 = 
 \auxind\big( \cell, n_{\max}(\cell) \big).
\end{equation}
This choice entails that we have, on the one hand,
\begin{subequations}
\label{leaves-vs-marked}
\begin{equation}
\label{leaves-vs-marked;leaves}
 \forall \cell \in \leaves(\mtree_N)
 \quad
 \mind(\cell) \leq t_N
\end{equation}
for the leaves of $\mtree_N$ and, on the other hand,
\begin{equation}
\label{leaves-vs-marked;marked}
 \forall \cell \in \mtree_N
 \quad
 \npatches(\cell) \neq \emptyset
 \implies
 \mind(\cell) \geq t_N
\end{equation}
\end{subequations}
for the marked cells. In fact, if $\cell\in\mtree_N\setminus\leaves(\mtree_N)$ is an internal node, then the implication follows readily from Lemma \ref{L:decreasing-max-ind}, else we have to take into account also the fact that the static marking indicator $\mind$ does not change after the last time it was marked.
 
We will need a recursion for the static penalization indicator $\mauxind$. In order to derive it, let $\cell \in \mtree_N$ be any node of the reduced tree and introduce the following notations. We write
\begin{equation*}
 \apatches(\cell)
 :=
 \npatches(\cell) \setminus \sdpatch(\cell)
\end{equation*}
for the set of the patches that were subdivided at the request of $\cell$ in order to prepare its subdivision. Furthermore, construct a decreasing sequence $(n_j)_{j=0}^J$ by requiring
\begin{gather*}
 n_0 = n_{\max}(\cell),
\quad
 n_{j+1}
 =
 \max \{ k \mid k<n_j, \mcell_k = \cell \} \text{ for }j<J
\intertext{and}
 \{ k \mid k<n_J, \mcell_k = \cell \} = \emptyset,
 \text{ i.e.\ } n_J = n_\text{min}(\cell).
\end{gather*}
Finally, we let $\cell'$ denote the child of $\mcmA(\cell)$ containing $\cell$ and observe the identity $n_{\min}(\cell')-1 = n_{\max}\big(\mcmA(\cell)\big)$.
Applying the four cases of \eqref{aux-ind;recursion} suitably, we derive
\begin{equation*}
\begin{aligned}
 \auxind\big( &\cell, n_{\max}(\cell) \big)
 =
  \auxind(\cell,n_1+1)
 =
 \frac{1}{\err(\cell)}
  +
 \auxind\big( \cell,n_1\big)
\\
 &=
 \frac{J}{\err(\cell)}
  +
 \auxind\big( \cell,n_{J}\big)
 =
 \frac{\#\apatches(\cell)}{\err(\cell)}
 +
 \auxind\big( \cell,n_{\min}(\cell)\big)
\\
 &=
 \frac{\#\apatches(\cell)}{\err(\cell)}
 +
 \auxind\big( \cell',n_{\max}\big(\mcmA(\cell)\big)+1\big)
\\
 &=
 \frac{\#\apatches(\cell)}{\err(\cell)}
  +
 \frac{1}{\err\big(\mcmA(\cell)\big)}
  +
 \auxind\big(\mcmA(\cell),n_{\max}\big(\mcmA(\cell)\big)\big).
\end{aligned}
\end{equation*}
The recursion for the static penalization indicator $\mauxind$ is therefore
\begin{equation}
\label{mauxind-recursion}
 \mauxind(\cell)
 =
 \frac{\#\apatches(\cell)}{\err(\cell)}
 +
 \frac{1}{\err\big(\mcmA(\cell)\big)}
 +
 \mauxind\big(\mcmA(\cell)\big),
\end{equation}
while the static marking indicator satisfies
\begin{equation}
\label{mind-mauxind}
 \mind(\cell)
 =
 \left(
  \frac{1}{\err(\cell)} + \mauxind(\cell)
 \right)^{-1},
\end{equation}
an immediate consequence of the definitions \eqref{mind} and \eqref{marking-with-ell}.

The next two lemmas lay the groundwork to exploit the inequalities in \eqref{leaves-vs-marked}. They are generalizations of Binev~\cite[Lemmas~2.3 and 2.4]{binev_tree_2018} to the setup at hand and make use of the following notation. If $\mtree$ is a general subtree of $\mtree_N$, let
\begin{equation*}
 \npatches(\mtree)
 :=
 \bigcup_{\cell \in \mtree} \npatches(\cell)
\end{equation*}
denote the set of all associated patches that are subdivided in $\tree_N$. Given a subset $\leaves\subseteq\leaves(\tree_N)$ of leaves,
\begin{equation*}
 \sdpatch(\leaves)
 :=
 \{ s(\cell) \mid \cell \in \leaves \}
\end{equation*}
stands for the associated patches, which are not subdivided in $\tree_N$. Note that the subdivision patches of different leaves may coincide, cf.\ \eqref{patch-and-members}, but we have
\begin{equation}
\label{leaves<}
\#\leaves
\leq
\maxpatchsize \#\sdpatch(\leaves)
\end{equation}
with $\maxpatchsize$ from \eqref{maxpatchsize}.
\begin{lemma}[Upper bound for error on leaves]
\label{L:ubd}
Let $\leaves\subseteq\leaves(\tree_N)$ be a subset of leaves of $\tree_N$. Then
\begin{equation*}
 \sum_{\cell\in\leaves} \err(\cell)
 \leq
 \big( \maxpatchsize\#\sdpatch(\leaves) + \#\npatches(\mtree) \big) t_N,
\end{equation*}
where $\mtree$ is the minimal general subtree of the reduced tree $\mtree_N$ that contains $\leaves$ and the root $\Omega$.
\end{lemma}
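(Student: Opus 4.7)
The plan is to bound the weighted sum $X := \sum_{\cell\in\leaves} \err(\cell)\mind(\cell)^{-1}$ and then recover the lemma from the leaf inequality $\mind(\cell)\le t_N$ in \eqref{leaves-vs-marked;leaves}, which immediately gives $\sum_{\cell\in\leaves}\err(\cell)\le t_N X$. It will then suffice to prove $X \le \#\leaves + \#\npatches(\mtree)$, since \eqref{leaves<} upgrades this into the announced estimate.

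The first ingredient is a pointwise telescoping identity along the reduced tree. Multiplying \eqref{mauxind-recursion} by $\err(\cell)$ and invoking \eqref{mind-mauxind} yields, for every non-root $\cell\in\mtree_N$,
\[
  \err(\cell)\mind(\cell)^{-1} = 1 + \#\apatches(\cell) + \err(\cell)\mind(\mcmA(\cell))^{-1},
\]
while a direct inspection of the algorithm---the initialization $\auxind(\Omega,0)=0$ and line~\ref{dynamic;1}---gives the matching root identity $\err(\Omega)\mind(\Omega)^{-1}=1+\#\apatches(\Omega)$; the key observation here is that $\mauxind(\Omega)$ is read off at the last iteration $\Omega$ is still a leaf, so only the $\#\apatches(\Omega)$ markings of $\Omega$ that did \emph{not} trigger its own subdivision contribute.

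Next, I will run a tree induction on $\mtree$, ascending from its leaves to the root: for every $\cell\in\mtree$, I prove
\[
  \sum_{\cell'\in\leaves \,:\, \cell'\text{ in }\mtree\text{-subtree below }\cell} \err(\cell')\mind(\cell')^{-1}
  \;\le\;
  \sum_{\cell''\neq\cell\,:\,\cell''\text{ in }\mtree\text{-subtree below }\cell} \bigl(1+\#\apatches(\cell'')\bigr) + \err(\cell)\mind(\cell)^{-1}.
\]
In the inductive step for $\cell$ with $\mtree$-children $\cell_1,\ldots,\cell_m$, I expand each $\err(\cell_i)\mind(\cell_i)^{-1}$ by the identity above (noting $\mcmA(\cell_i)=\cell$) and then replace $\sum_i \err(\cell_i)\mind(\cell)^{-1}$ by $\err(\cell)\mind(\cell)^{-1}$ via \eqref{subadditivity-mtree} applied to the two-level subtree $\{\cell\}\cup\{\cell_1,\ldots,\cell_m\}$ of $\mtree_N$. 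Instantiating the induction at $\cell=\Omega$ and inserting the root identity collapses the bound to $X \le \sum_{\cell''\in\mtree} \bigl(1+\#\apatches(\cell'')\bigr)$. Writing $\#\apatches(\cell'') = \#\npatches(\cell'')-\chi(\cell'')$ with $\chi(\cell'')=1$ iff $\cell''$ is subdivided in $\tree_N$, three elementary counts---pairwise disjointness of $\npatches(\cell'')$ across $\cell''\in\mtree$, $\chi\equiv 1$ on $\mtree\setminus\leaves$ and $\chi\equiv 0$ on $\leaves$, and $\#\mtree = \#\leaves + \#(\mtree\setminus\leaves)$---collapse the right-hand side to exactly $\#\leaves + \#\npatches(\mtree)$.

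The hard part will be the tree induction: $\mtree$ is in general irregular (a single $\mtree$-node may carry many $\mtree$-children sitting at very different $\tree_N$-depths), so any level-by-level telescoping done along individual paths $\cell\leadsto\Omega$ would double count the internal contributions. Funneling the weighted sum through a single invocation of \eqref{subadditivity-mtree} at each internal $\mtree$-node is precisely the device that avoids this double counting. A secondary pitfall is getting the root identity right: if one mistakenly uses $\mauxind(\Omega)=\#\npatches(\Omega)/\err(\Omega)$, the final count acquires a stray $+1$ that cannot be absorbed by $\maxpatchsize\#\sdpatch(\leaves)$ in the abstract bisection setting $\maxpatchsize=1$.
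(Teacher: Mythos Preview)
Your proof is correct and takes essentially the same approach as the paper: both telescope the quantity $\err(\cell)\mind(\cell)^{-1}$ along reduced-tree paths via \eqref{mauxind-recursion} and \eqref{mind-mauxind}, invoke the leaf bound \eqref{leaves-vs-marked;leaves}, and control the accumulated internal-node contributions by a single application of subadditivity \eqref{subadditivity-mtree} at each internal node of $\mtree$. The only difference is organizational---the paper expands each leaf's full path to the root and then reorders the resulting double sum (Fubini plus \eqref{subadditivity-mtree}), whereas you package the identical computation as an ascending tree induction; your final count $\sum_{\cell''\in\mtree}(1+\#\apatches(\cell''))=\#\leaves+\#\npatches(\mtree)$ matches the paper's use of $\sdpatch(\cell_j)\in\npatches(\cell_j)$ for the internal nodes $\cell_j$ along each path.
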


\begin{proof}
By potentially passing to a subset of $\leaves$, we can assume that the local errors $\err(\cell)$, $\cell \in \leaves$,  never vanishes on the leaves.  Then, in view of \eqref{subadditivity} and \eqref{err,ind=0}, we have that $\err$ and $\mind$ never vanish on the whole tree $\mtree$.

Given a leaf $\cell \in \leaves$, we define a finite sequence $(\cell_j)_{j=0}^J$ by
\begin{equation*}
 \cell_0 = \cell,
\quad
 \cell_{j+1}
 =
 \mcmA(\cell_j) \text{ whenever applicable},
\quad
 \cell_J
 =
 \Omega.
\end{equation*}
Making use of identity \eqref{mind-mauxind}, recursion \eqref{mauxind-recursion}, as well as
\begin{equation*}
 \sdpatch(\cell_0) \not\in \npatches(\cell_0),
\quad
 \sdpatch(\cell_j) \in \npatches(\cell_j) \text{ for }j=1,\dots,J-1,
\quad
 \# \npatches(\cell_J)= 1 
\end{equation*}
and \eqref{aux-ind;init}, we derive
\begin{align*}
 \frac{1}{\mind(\cell_0)}
 &=
 \frac{1}{\err(\cell_0)}
 +
 \mauxind(\cell_0)
\\
 &=
 \frac{1}{\err(\cell_0)}
 +
 \frac{\#\apatches(\cell_0)}{\err(\cell_0)}
 +
 \frac{1}{\err(\cell_1)}
 +
 \mauxind(\cell_1)
\\
 &=
 \frac{1 + \#\npatches(\cell_0)}{\err(\cell_0)}
 +
 \frac{\#\npatches(\cell_1)}{\err(\cell_1)}
 +
 \frac{1}{\err(\cell_2)}
 +
 \mauxind(\cell_2)
 =
 \cdots
\\
 &=
 \frac{1 + \#\npatches(\cell_0)}{\err(\cell_0)}
 +
 \sum_{j=1}^{J-1}
  \frac{\#\npatches(\cell_j)}{\err(\cell_j)}
 +
 \frac{1}{\err(\cell_J)}
 +
 \mauxind(\cell_J) 
\\
 &=
 \frac{1 + \#\npatches(\cell_0)}{\err(\cell_0)}
 +
 \sum_{j=1}^{J}
 \frac{\#\npatches(\cell_j)}{\err(\cell_j)}.
\end{align*}
We multiply by $\mind(\cell_0) \err(\cell_0)$ and get
\begin{align*}
 \err(\cell)
 &=
 \err(\cell_0)
 =
 \mind(\cell_0) \left(
  1 + \#\npatches(\cell_0)
  +
  \sum_{j=1}^{J}
   \#\npatches(\cell_j)\frac{\err(\cell_0)}{\err(\cell_j)}
 \right)
\\
 &=
 \mind(\cell) \left(
  1 + \#\npatches(\cell)
  +
  \sum_{\cell'\in\manc(\cell)}
   \#\npatches(\cell')\frac{\err(\cell)}{\err(\cell')}
  \right),
\end{align*} 
where $\manc(\cell) = \{ \cell_1, \dots, \cell_{J} \}$ stands for the ancestors in the reduced tree $\mtree_N$ of the leaf $\cell=\cell_0$.

We proceed by summing over all leaves $\cell\in\leaves$. Thanks to $\mind(\cell) \leq t_N$ from \eqref{leaves-vs-marked;leaves}, we obtain
\begin{equation}
\label{sum-over-leaves-before-fubini}
 \sum_{\cell\in\leaves} \err(\cell)
 \leq
 t_N \left(
  \#\leaves
  +
  \sum_{\cell\in\leaves} \#\npatches(\cell)
  +
  \sum_{\cell\in\leaves} \sum_{\cell'\in\manc(\cell)}
   \#\npatches(\cell') \frac{\err(\cell)}{\err(\cell')}
  \right).
\end{equation}
In order to bound the critical double sum where cells are hit multiple times, we observe that, for any pair $(\cell,\cell')\in \leaves \times \big(\mtree \setminus \leaves(\mtree)\big)$, we have the equivalence
\begin{equation*}
 \cell \in \leaves \text{ and }\cell' \in \manc(\cell)
 \iff
 \cell' \in \mtree \setminus \leaves
 \text{ and }
 \cell \in \leaves_{\cell'},
\end{equation*}
where $\leaves_{\cell'} := \tree_{\cell'} \cap \leaves$ stands for the descendants of $\cell' \in \mtree \setminus \leaves$ that are in $\leaves$.
Hence, we can reorder the terms and use the subadditivity \eqref{subadditivity-mtree} on the reduced tree to establish
\begin{align*}
 \sum_{\cell\in\leaves} \sum_{\cell'\in\manc(\cell)}
 \#\npatches(\cell') \frac{\err(\cell)}{\err(\cell')}
 =
 \sum_{\cell'\in \mtree \setminus \leaves}
  \#\npatches(\cell')
  \frac{\sum_{\cell\in\leaves_{\cell'}} \err(\cell)}
   {\err(\cell')} 
 \leq
 \sum_{\cell'\in \mtree \setminus \leaves}
  \#\npatches(\cell').
\end{align*}
Inserting the last inequality and \eqref{leaves<} into \eqref{sum-over-leaves-before-fubini}, we arrive at
\begin{equation*}
 \sum_{\cell\in\leaves} \err(\cell)
 \leq
 \left(
  \maxpatchsize \#\sdpatch(\leaves)
  +
  \sum_{\cell\in \mtree}
   \#\npatches(\cell)
  \right) t_N.
\end{equation*}
As the sets $\npatches(\cell)$, $\cell\in\mtree$, are pairwise disjoint,
the proof is finished.
\end{proof}
\begin{lemma}[Lower bound for error of marked cells]
\label{L:lbd}
Let $\cell\in\mtree_N\setminus\{\Omega\}$ be a cell that was marked by Algorithm~\ref{A:conf-tree-algo}. Then
\begin{equation*}
 \err(\cell)
 \geq
 \#\npatches(\mtree)\, t_N,
\end{equation*}
where $\mtree$ is the subtree that consists of $\cell$ and all its descendants in $\mtree_N$.
\end{lemma}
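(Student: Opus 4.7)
The plan is to prove, by strong induction on the number of cells in $\mtree$, the strengthened inequality
\begin{equation*}
 \err(\cell) \left(\frac{1}{t_N} - \frac{1}{\mind(\cell)}\right)
 \geq
 \#\npatches(\mtree) - \bigl(1 + \#\apatches(\cell)\bigr).
\end{equation*}
Both sides are meaningful: the left-hand side is non-negative because $\cell$ was marked, so $\mind(\cell) \geq t_N$ by \eqref{leaves-vs-marked;marked}. Moreover, \eqref{mind-mauxind} together with the bound $\mauxind(\cell) \geq \#\apatches(\cell)/\err(\cell)$ implicit in \eqref{mauxind-recursion} gives $\err(\cell)/\mind(\cell) \geq 1 + \#\apatches(\cell)$. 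Adding this bound to the strengthened inequality and using $1 + \#\apatches(\cell) \geq \#\npatches(\cell)$ produces $\err(\cell)/t_N \geq \#\npatches(\mtree)$, which is exactly the claim of the lemma.

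For the base case $\mtree = \{\cell\}$, the cell $\cell$ must be a leaf of $\tree_N$, since any internal node of $\tree_N$ has leaf-of-$\tree_N$ descendants that belong to $\mtree_N$. Combining $\mind(\cell) \geq t_N$ with $\mind(\cell) \leq t_N$ from \eqref{leaves-vs-marked;leaves} yields $\mind(\cell) = t_N$, so the left-hand side vanishes; the right-hand side equals $-1$ because $\cell$ was not subdivided and thus $\#\npatches(\cell) = \#\apatches(\cell)$. For the inductive step, let $c_1,\dots,c_k$ denote the marked children of $\cell$ in $\mtree_N$ (the remaining children, if any, are unmarked leaves of $\tree_N$ that contribute nothing to $\#\npatches$). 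Since $\mcmA(c_i) = \cell$, the recursion \eqref{mauxind-recursion} gives $1/\mind(c_i) = (1 + \#\apatches(c_i))/\err(c_i) + 1/\mind(\cell)$; substituting this into the inductive hypothesis applied to $c_i$ makes the $(1 + \#\apatches(c_i))$ terms cancel exactly, leaving
\begin{equation*}
 \err(c_i) \left(\frac{1}{t_N} - \frac{1}{\mind(\cell)}\right)
 \geq \#\npatches(\mtree_{c_i}),
\end{equation*}
where $\mtree_{c_i}$ is the subtree of $\mtree_N$ rooted at $c_i$. Summing over $i$ and invoking the subadditivity $\sum_i \err(c_i) \leq \err(\cell)$---a consequence of \eqref{subadditivity-mtree} applied to the one-level general subtree rooted at $\cell$---yields $\err(\cell)(1/t_N - 1/\mind(\cell)) \geq \#\npatches(\mtree) - \#\npatches(\cell)$, and the strengthened inequality follows because $\#\npatches(\cell) \leq 1 + \#\apatches(\cell)$.

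The main obstacle is identifying the correct strengthening: a direct induction on $\err(\cell) \geq \#\npatches(\mtree)\, t_N$ fails because subadditivity alone only recovers $\err(\cell) \geq \bigl(\#\npatches(\mtree) - \#\npatches(\cell)\bigr) t_N$ from the marked children, and there is no clean mechanism to restore the missing $\#\npatches(\cell)\, t_N$ term without double-counting against the direct bound $\err(\cell) \geq (1 + \#\apatches(\cell))\, t_N$. Isolating the factor $1/t_N - 1/\mind(\cell)$, which measures the gap between $\cell$'s marking threshold and the current global threshold, is what enables the crucial cancellation in the inductive step via the $\mauxind$-recursion.
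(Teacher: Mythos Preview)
Your proof is correct and follows essentially the same tree-induction strategy as the paper's. Your strengthened invariant $\err(\cell)\bigl(1/t_N - 1/\mind(\cell)\bigr) \geq \#\npatches(\mtree) - \bigl(1+\#\apatches(\cell)\bigr)$ is algebraically equivalent, via \eqref{mind-mauxind} and \eqref{mauxind-recursion}, to the paper's invariant $\err(\cell) \geq t_N\bigl(M + \err(\cell)/\err(\mcmA(\cell)) + \err(\cell)\,\mauxind(\mcmA(\cell))\bigr)$; your version simply packages the extra terms in terms of $\cell$ itself rather than its reduced-tree parent, which lets you discard unmarked children outright instead of carrying them through the base case as the paper does.
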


\begin{proof}
We assume $t_N>0$, excluding the trivial case $t_N=0$. Consequently, $\err$ and $\mind$ never vanish on $\mtree \setminus \leaves(\mtree)$.

We shall employ a tree induction to prove
\begin{equation}
\label{prove-by-tree-induction}
 \err(\cell)
 \geq
 t_N \sum_{\cell'\in\mtree} \#\npatches(\cell').
\end{equation}
The claim then follows because the sets $\npatches(\cell)$, $\cell\in\mtree$, are pairwise disjoint. In order to verify \eqref{prove-by-tree-induction}, we start by considering $\mtree$ which consists of a leaf $\cell$ of $\mtree_N$. We establish a slightly stronger variant of \eqref{prove-by-tree-induction}, incorporating also the case of leaves that were never marked.

\emph{Case 1:} $\cell$ was marked at least once. Let $n$ be the last iteration when $\cell$ was marked, i.e.\ $\ind(\cell,n)\geq t_N$. Since the marking indicator changes only if $\cell$ is marked or subdivided, we deduce $\mind(\cell) \geq t_N$. Therefore, identity \eqref{mind-mauxind} for the static marking indicator, $\cell \neq \Omega$, recursion \eqref{mauxind-recursion} for static penalization indicator, and $\apatches(\cell) = \npatches(\cell)$ give
\begin{subequations}
\label{base-case-full-bound}
\begin{equation}
\label{base-case-full-bound;marked}
\begin{aligned}
 \err(\cell)
 =
 \mind(\cell) \frac{\err(\cell)}{\mind(\cell)}
 &\geq
 t_N \left(
  1 
  +
  \#\apatches(\cell)
  +
  \frac{\err(\cell)}{\err\big(\mcmA(\cell)\big)}
  +
  \err(\cell) \mauxind\big(\mcmA(\cell)\big)
 \right)
\\
 &\geq
 t_N \left(
  \#\npatches(\cell)
  +
  \frac{\err(\cell)}{\err\big(\mcmA(\cell)\big)} 
  +
  \err(\cell)\mauxind\big(\mcmA(\cell)\big)
 \right).
\end{aligned}
\end{equation}

\emph{Case 2:} $\cell$ was never marked. As $\cell \neq \Omega$, \eqref{leaves-vs-marked;marked} and $\#\npatches(\cell)=0$ allow us to write
\begin{equation}
\label{base-case-full-bound;unmarked}
 \err(\cell)
  =
 \mind\big( \mcmA(\cell) \big)
  \frac{\err(\cell)}{\mind\big( \mcmA(\cell) \big)}
 \geq
 t_N \left(
  \#\npatches(\cell)
  +
  \frac{\err(\cell)}{\err\big(\mcmA(\cell)\big)}
  +
  \err(\cell) \mauxind\big(\mcmA(\cell)\big)
 \right).
\end{equation}
\end{subequations}

The two inequalities \eqref{base-case-full-bound;marked} and \eqref{base-case-full-bound;unmarked} together constitute the base case of our tree induction.

We turn to the induction step. In accordance with \eqref{base-case-full-bound}, we assume that we are given $J\in\N$ and finite sequences $(\cell_j)_{j=1}^J$ and $(M_j)_{j=1}^J$ such $\mcmA(\cell_j)=\cell$ and
\begin{equation}
\label{tree-induction-hypothesis}
 \err(\cell_j)
 \geq
 t_N \left(
  M_j
  +
  \frac{\err(\cell_j)}{\err(\cell)}
  +
  \err(\cell_j) \mauxind(\cell)
\right).
\end{equation}
We add these inequalities and multiply by $\err(\cell) / \big( \sum_{j=1}^J \err(\cell_j) \big)$ to obtain
\begin{equation*}
 \err(\cell)
 \geq
 t_N \left(
 \frac{\err(\cell)}{\sum_{j=1}^J \err(\cell_j)}
  \sum_{j=1}^J
   M_j + 1 + \err(\cell) \mauxind(\cell)
\right).
\end{equation*}
Hence, subadditivity \eqref{subadditivity-mtree} on the reduced tree implies
\begin{equation*}
 \err(\cell)
 \geq
 t_N \left(
  \sum_{j=1}^J
   M_j + 1 + \err(\cell) \mauxind(\cell)
 \right).
\end{equation*}
Using $\cell\neq\Omega$ and the recursion \eqref{mauxind-recursion} for $\mauxind$ another time and $1 + \#\apatches(\cell) = \#\npatches(\cell)$, we arrive at
\begin{equation*}
 \err(\cell)
 \geq
 t_N \left(
  M
  +
  \frac{\err(\cell)}{\err\big(\mcmA(\cell)\big)}
  +
  \err(\cell) \mauxind\big(\mcmA(\cell)\big)
 \right).
\end{equation*}
with
\begin{equation*}
 M
 =
 \#\npatches(\cell) + \sum_{j=1}^J M_j. 
\end{equation*}
The last inequality is the counterpart of the induction hypothesis \eqref{tree-induction-hypothesis} and implies \eqref{prove-by-tree-induction} after making $M$ explicit.
\end{proof}

After these preparations we are ready for the proper proof of Theorem~\ref{T:nba-subadditivity}.

\begin{proof}[Proof of Theorem \ref{T:nba-subadditivity}]
To bound $\Err(\tree_N)$ in terms of $\sigma_n$ for a given $n\in\{0,\dots, N\}$, we compare the generated tree $\tree_N$ with a \emph{conforming} best approximation tree $\tree_n^\star$ satisfying
\begin{equation*}
 \Err(\tree_n^\star) = \sigma_n
\quad\text{and}\quad
 |\tree^\star_n| = \#\npatches^\star \leq n,
\end{equation*}
where $\npatches^\star$ denotes the set of the patches that have been subdivided in the creation of $\tree_n^\star$. Therefore,
\begin{equation*}
 \tree_n^\star \setminus \leaves(\tree_n^\star)
 =
 \bigcup_{\sdpatch \in \npatches^\star} \sdpatch.
\end{equation*}

Let us start by observing that the error of certain leaves $\cell\in\leaves(\tree_N)$ is readily bounded with the help of subadditivity. Indeed, if $\cell \not\in \tree_n^\star \setminus \leaves(\tree_n^\star)$, then $\cell$ is a leaf of a subtree $\tree_{\cell^\star} \cap \tree_N$, which consists of a leaf $\cell^* \in \leaves(\tree_n^\star)$ of the best approximation tree and all its descendants in $\tree_N$. Applying subadditivity \eqref{subadditivity} to all such subtrees, we deduce
\begin{equation}
\label{easy-bound}
 \sum_{\cell\in\leaves_0} \err(\cell)
 \leq
 \sigma_n 
\end{equation}
with
\begin{align*}
 \leaves_0
 :=
 \big\{ \cell \in \leaves(\tree_N) \mid
  \cell \not\in \tree_n^\star \setminus \leaves(\tree_n^\star)
 \big\}.
\end{align*}
Moreover, if $\npatches^\star \subseteq \npatches$, then $\tree_n^\star \setminus \leaves(\tree_n^\star) \subseteq \tree_N \setminus \leaves(\tree_N)$ and \eqref{easy-bound} becomes $\Err(\tree_N) \leq \sigma_n$, which implies the desired bound. We are therefore left with the case
\begin{equation}
\label{one-subdivision-missed}
 \npatches^\star \setminus \npatches \not= \emptyset,
\end{equation}
which entails in particular $n\geq2$.

In order to proceed, we first derive a lower bound for the best error $\sigma_n$ in terms of $N$, $n$, and the maximal indicator value $t_N=\max\{ \mind(\cell) \mid \cell\in\tree_N\}$ on the leaves of the tree $\tree_N$. In view of \eqref{one-subdivision-missed}, $\npatches$ and $\npatches^\star$ have at most $n-1$ subdivision patches in common and so
\begin{equation*}
 \#(\npatches \setminus \npatches^\star) \geq N-n+1.
\end{equation*}
For every patch $\sdpatch \in \npatches\setminus\npatches^\star$, there exists a marked cell $\mcell_\sdpatch \in \mtree_N$ with $\sdpatch \in \npatches(\mcell_\sdpatch)$. As $\sdpatch \not\in \npatches^\star$, we have $\sdpatch \cap \big( \tree_n^\star \setminus \leaves(\tree_n^\star) \big) = \emptyset$ because of \eqref{patch-and-members} and therefore $\mcell_\sdpatch \not\in \tree_n^\star \setminus \leaves(\tree_n^\star)$ by \eqref{locating-subdivision-patches}, the fact that $\sdpatch$ is necessary for subdividing for $\cell^\star_\sdpatch$, and the conformity of $\tree_n^\star$. Hence, 
\begin{equation*}
 \npatches \setminus \npatches^\star
 \subseteq
 \bigcup_{\cell \in \mtree_N \setminus (\tree_n^\star \setminus \leaves(\tree_n^\star) )} \npatches(\cell).
\end{equation*}
The set $\mtree_N \setminus \big( \tree_n^\star \setminus \leaves(\tree_n^\star) \big)$ is a union of subtrees in $\mtree_N$. We collect in the set $R$ the roots of these subtrees.  For each $\cell \in R$, there exists a unique leaf $\cell^\star \in \leaves(\tree_n^\star)$ of the best approximation tree such that $\cell^\star \in \{\cell\} \cup \mathcal{A}(\cell)$. Hence, employing the subadditivity \eqref{subadditivity} of the local errors and applying the local lower bound in Lemma \ref{L:lbd} on each cell root in $R$, we obtain
\begin{equation}
\label{glb}
\begin{aligned}
 \sigma_n
 &\geq
 \sum_{\cell\in R} \err(\cell)
 \geq
 \sum_{\cell \in R} \# \npatches(\tree_\cell \cap \mtree_N) t_N
 =
 \#\npatches\Big( \mtree_N \setminus \big( \tree_n^\star \setminus \leaves(\tree_n^\star) \big) \Big) t_N
\\
 &\geq
 \# (\npatches \setminus \npatches^\star) t_N
 \geq
 (N-n+1) t_N.
\end{aligned}
\end{equation}

Next, we complement the partial upper bound \eqref{easy-bound}. To this end, we first employ the upper bound in Lemma~\ref{L:ubd} with $\leaves := \leaves(\tree_N) \setminus \leaves_0$. Observe that, in the notation of Lemma~\ref{L:ubd}, the sets $\sdpatch(\leaves)$ and $\npatches(\mtree)$ are disjoint and, thanks to $\leaves \subset \tree^\star_n \setminus \leaves(\tree^\star_n)$ and \eqref{locating-subdivision-patches}, their union satisfies $\sdpatch(\leaves) \cup \npatches(\mtree) \subseteq \tree_n^\star \setminus \leaves(\tree_n^\star)$. Recalling \eqref{patch-and-members}, we therefore find
\begin{equation*}
 \sum_{\cell\in\leaves} \err(\cell)
 \leq
 \maxpatchsize \, \#\big( \sdpatch(\leaves) \cup \npatches(\mtree) \big) \, t_N
 \leq
 \maxpatchsize \, \#\npatches^\star \, t_N
 \leq
 \maxpatchsize n t_N.
\end{equation*}
Inserting the lower bound \eqref{glb}, we arrive at
\begin{equation}
\label{2nd-ubd}
 \sum_{\cell\in\leaves} \err(\cell)
 \leq
 \frac{\maxpatchsize n}{N-n+1} \sigma_n.
\end{equation}
Thus, combining the two upper bounds \eqref{easy-bound} and \eqref{2nd-ubd}, we conclude
\begin{equation*}
 \Err(T_N)
 \leq
 \sigma_n \left(
  1 + \frac{\maxpatchsize n}{N-n+1}
 \right)
 =
 \frac{N+1 + (\maxpatchsize-1) n}{N-n+1} \sigma_n.
\qedhere
\end{equation*}
\end{proof}

%
%
%
%

%
%
%
%

%
%
%
\section{FEM-motivated numerical examples}
\label{S:Tests}
%
%
This section complements our theoretical results with several numerical examples. The examples focus on the adaption of the marking indicators to conforming meshes. More precisely, they contrast Algorithm~\ref{A:conf-tree-algo} with Algorithm~\ref{A:tree-algo}, whose marking indicators do not take the conformity of the meshes into account.
\begin{algorithm}
	\caption{Tree algorithm for conforming meshes with simple marking}	
	\label{A:tree-algo}
	\begin{algorithmic}[1]
		\Statex
		\Require master tree $\mastertree$ with local error functional $\err$
		\Statex
		\Statex \Comment{initialization}
		
		\State $\tree_0 \gets \{\Omega\}$, $\ind(\Omega) \gets \err(\Omega)$, $n \gets 0$
		\Statex
		\While {$t_n := \max_{\cell\in\leaves(\tree_n)} \ind(\cell)>0$}
		\Comment{or some alternative stopping test}
		\State mark some $\mcell_n \in \leaves(\tree_n)$ with $\ind(\mcell_n) = t_n$
		\State pick a subdivision patch $\sdpatch_n \subseteq \leaves(\tree_n)$ that is necessary for subdividing $\mcell_n$
		\State define $\tree_{n+1}$ by adding all children $C_n$ of the cells in $\sdpatch_n$ to $\tree_n$
		\Statex
		\Statex \Comment{penalize the marking indicators of the subdivided cells}
		\ForAll {$\cell \in C_n$}
		\State $\ind(\cell) \gets \left( \displaystyle
		\frac{1}{\err(\cell)} + \frac{1}{\ind\big(\parent(\cell)\big)}
		\right)^{-1}$
		\EndFor
		\State $n \gets n+1$
		\EndWhile
	\end{algorithmic}
\end{algorithm}

The comparison is conducted 
with newest-vertex bisection for edge-to-edge triangulations; cf.\ Example \ref{E:nvb-triangulations}. 
We shall briefly indicate the interest of the setup 
in the numerical solution of partial differential equations. 

Before turning to the numerical examples, let us observe that the abstract setup of \S\ref{S:TreeApprox-for-ConfMeshes} covers also initial meshes with several cells. In applications, this fact is in particular useful when the domain $\Omega$ has a more complicated shape. 
\begin{remark}[Arbitrary initial meshes]
\label{R:arb-init-mesh}
Assume that $\imesh$ is any finite partition of a domain $\Omega \subseteq \Rd$ and that $\tree_{\cell}$ is an infinite tree without leaves for each cell $\cell \in \imesh$. We then introduce a master tree $\mastertree$ by taking $\Omega$ as root, the elements in $\imesh$ as children of $\Omega$, and, for each $\cell\in\imesh$, the descendants in $\tree_{\cell}$ as descendants of $\cell$. Since there is a one-to-one correspondence between subforests of $\bigcup_{\cell\in\imesh} \tree_{\cell}$ and subtrees of $\mastertree$, notions such as ``conforming'' given in terms of the former naturally carry over to the latter, without changing $\maxpatchsize$ from \eqref{maxpatchsize}. Furthermore, if $\err$ is an error functional on  $\bigcup_{\cell\in\imesh} \tree_{\cell}$, we extend it on $\mastertree$ by $\err(\Omega) := \infty$. Then the first iteration of Algorithm~\ref{A:conf-tree-algo} generates $\imesh$ with the assignments $\auxind(\cell) = 0$ and $\ind(\cell) = \err(\cell)$ for all $\cell \in \imesh$, which appears to be the ``natural'' starting point for multi-cell initial meshes. 
\end{remark}

The numerical results have been obtained with implementations in the framework of the finite element toolbox \textsf{ALBERTA} \cite{schmidt_design_2005}. Local errors are approximated using straight-forward numerical integration of order 17.

\subsection{$H^1$-approximation with Crouzeix-Raviart or Lagrange elements}
\label{S:H1Approx}
%
%

Suppose that $\imesh$ is an edge-to-edge triangulation of the planar domain $\Omega\subset\R^2$, where $\partial\Omega$ is locally a Lipschitz graph; the latter assumption serves to avoid technicalities at the boundary; cf.\ Stevenson \cite[\S3]{stevenson_completion_2008} or Veeser and Zanotti \cite{veeser_quasi-optimal_boundary_2019} for boundaries that are not locally a graph. Owing to Binev et al.\ \cite[Lemma~2.1]{binev_adaptive_2004}, we can choose orderings of the vertices in each triangle of $\imesh$ such that if the intersection of two triangles $\cell_1,\cell_2\in\imesh$ is an edge, then it is either the refinement edge of both or none:
\begin{equation}
\label{admis-init-triangulation}
 \forall \cell_1,\cell_2 \in \imesh
\quad
 \redge(\cell_1) = \cell_1 \cap \cell_2
 \iff
 \redge(\cell_2) = \cell_1 \cap \cell_2.
\end{equation}
The vertex orderings and newest vertex bisection induce in particular infinite binary trees $\tree_{\cell}$, $\cell \in \imesh$, and we can
define $\mastertree$ as in Remark \ref{R:arb-init-mesh} and $\ctrees$ as in Example~\ref{E:nvb-triangulations}. Exploiting \eqref{admis-init-triangulation}, assumptions \eqref{sdpatch-for-cell} and \eqref{existence-necessary-subdivison-patches} again follow from Binev at al.\ \cite[Lemma~2.2~(ii) and (iii)]{binev_adaptive_2004}.

Theorem 2.4 of Binev et al.\ \cite{binev_adaptive_2004} provides the following important fact about edge-to-edge triangulation induced by newest vertex bisection. There is a constant $C_\text{cmpl} \geq 1$ such that if $\tree\subset\mastertree$ is any finite subtree rooted at $\Omega$, the smallest conforming tree $T' \in \ctrees$ containing $\tree$ satisfies
\begin{equation}
\label{completion-constant}
 \#\tree' \leq C_\text{cmpl} \#\tree. 
\end{equation}
The constant $C_\text{cmpl}$, called \emph{completion constant}, plays a key role in our comparison. Considering concrete examples, one obtains $C_\text{cmpl}\geq14$.

In order to define the local errors, we fix some function $u \in H^1_0(\Omega)$ with square-integrable weak derivatives of first order and vanishing trace on $\partial\Omega$. The squared best errors 
\begin{equation}
\label{H1-loc-err}
 \err(\cell)
 :=
 \inf_{p \in \Poly_1(\cell)} \norm{\nabla(u-p)}{L^2(\cell)}^2,
\quad
 \cell \in \mastertree,
\end{equation}
with polynomials of degree $\leq1$ in the $H^1(\cell)$-seminorm are then the local errors. Clearly, they are subadditive.

The interest in this setup originates in the global errors $\Err(\mesh) = \sum_{\cell\in\mesh} \err(\cell)$, where $\mesh = \leaves(\tree)$, $\tree \in \ctrees$, is an edge-to-edge triangulation.
To illustrate this, we write
\begin{equation*}
 \Poly_1(\mesh)
 :=
 \{ v \in L^\infty(\Omega) \mid
 \forall \cell \in \mesh\; v_{|\cell} \in \Poly_1(\cell) \}
\end{equation*}
and consider the following finite-dimensional spaces over any edge-to-edge triangulation $\mesh$:
\begin{align*}
 V_1(\mesh) 
 &:=
 \{ v \in \Poly_1(\mesh) \mid
  v \in C^0(\bar{\Omega}),\, v_{|\partial\Omega} = 0\},
\\
 V_2(\mesh)
 &:=
 \{ v \in \Poly_1(\mesh) \mid
  \text{$v$ is continuous in $\mathcal{C}_\mesh \cap \Omega$},\,
  v_{|\mathcal{C}_\mesh \cap \partial\Omega} = 0 \},
\end{align*}
where $\mathcal{C}_\mesh$ is the set of all edge midpoints of $\mesh$. Both spaces are standard finite element spaces. They can be used, for example, to  solve approximately the Poisson problem or other linear elliptic boundary value problem of second order. For the \emph{continuous linear finite element space} $V_1(\mesh) \subseteq H^1_0(\Omega)$, the well-known C\'ea lemma then implies that the error in the $H^1_0$-seminorm of the approximate solution is near best in the sense that it is bounded by the best error in $V_1 $; cf.\ Brenner and Scott~\cite[(2.8.1)]{brenner_mathematical_2008}. For the \emph{Crouzeix-Raviart space} $V_2(\mesh) \not\subseteq H^1_0(\Omega)$, this is more delicate and has been achieved only recently by Veeser and Zanotti \cite[Theorem~3.4]{veeser_quasi-optimal_over_2019}.
The best errors in these spaces in turn satisfy
\begin{equation}
\label{best-errors-linear-fe}
 \Err(\mesh)
 =
 \inf_{v \in V_2(\mesh)} \norm{\nabla_\mesh(u-v)}{L^2(\Omega)}^2
 \simeq
 \inf_{v \in V_1(\mesh)} \norm{\nabla(u-v)}{L^2(\Omega)}^2,
\end{equation}
where $\nabla_\mesh$ is the broken gradient given by $(\nabla_\mesh v)_{|\cell} = \nabla (v_{|\cell})$ for all $\cell\in\mesh$.
Here the identity for $V_2(\mesh)$ readily follows from the well-known fact that the Crouzeix-Raviart interpolant is a best approximation, see, e.g., \cite[Lemma~3.2]{veeser_quasi-optimal_over_2019}, while the equivalence for $V_1(\mesh)$ is established in Veeser \cite[Corollary~1]{veeser_approximating_2016}, with a multiplicative constant in the upper bound depending on the shape regularity of mesh $\mesh$.

In the light of \eqref{best-errors-linear-fe}, conforming tree approximation with the given setting computes approximations of the nonlinear best errors
\begin{equation*}
 \inf_{\tree \in \ctrees_n} \inf_{v \in V_i(\tree)} \norm{\nabla_T(u-v)}{L^2(\Omega)}^2,
\quad
 n \in \N_0,
\quad
 i=1,2,
\end{equation*}
where we write $\tree$ in place of $\leaves(\tree)$ for short. This creates benchmarks for testing algorithms that adaptively solve the Poisson problem 
whenever the exact solution is known.

Another, more basic, application of Algorithm~\ref{A:conf-tree-algo} is \emph{coarsening} or \emph{sparsifying}. This can be used within the adaptive solution of  boundary value problems by iterating the main steps
\begin{equation*}
\text{errore reduction $\to$ sparsity adjustment.}
\end{equation*}
For the Poisson problem, Binev et al.~\cite{binev_adaptive_2004} propose and analyze such an algorithm, employing the standard finite element spaces $V_1(\tree)$ and an algorithm closely related to Algorithm~\ref{A:tree-algo}.

For both applications, the near-best property is instrumental and holds for Algorithm~\ref{A:conf-tree-algo} and \ref{A:tree-algo}. More precisely, if $\tree^1_N$ is an output tree of Algorithm~\ref{A:conf-tree-algo}, then Theorem~\ref{T:nba-subadditivity} implies in particular
\begin{subequations}
\label{H1:A1,A2}
\begin{equation}
\label{H1:A1}
 \Err(\tree^1_N)
 \leq
 4 \inf \big\{ \Err(\tree) \mid
  \tree \in \ctrees,\; |\tree| \leq \tfrac{1}{2} N 
 \big\}.
\end{equation}
In the case of Algorithm~\ref{A:tree-algo}, we have
\begin{equation}
\label{H1:A2}
\Err(\tree^2_N)
\leq
4 \inf \left\{ \Err(\tree) \mid
 \tree \in \mathbb{T},\; \#\tree \leq \tfrac{3}{4} C_\text{cmpl}^{-1} N 
\right\},
\end{equation}
\end{subequations}
where $\mathbb{T}$ stands for the set of all subtrees of $\mastertree$ that are finite, full, and rooted at $\Omega$. This follows from \cite[Theorem~1.2]{binev_tree_2018} and \eqref{completion-constant}; cf.\ \cite[Theorem~7]{veeser_approximating_2016} and note that the algorithm therein is a reformulation of Algorithm~\ref{A:tree-algo} for subadditive local errors.

Comparing the two results \eqref{H1:A1,A2}, we observe differences in the complexity measure, the competing trees, and the dependences of the constant $C_2$ in the definition \eqref{def-near-best} of ``near best''. The difference in the complexity measure appears not to be so important as $\#\tree \leq |\tree| \leq 2 \#\tree$ for any $\tree \in \ctrees$. Note that the other two differences are interrelated. In fact, the absence of the edge-to-edge constraint on the right-hand side of \eqref{H1:A2} justifies the presence of the completion constant $C_\text{cmpl}\geq14$. These differences appear to be more important, in particular for other applications where $C_\text{cmpl}$ can be quite large; cf.\ Demlow and Stevenson~\cite{demlow_convergence_2011}.

Let us now see how these theoretical differences play out in practice by approximating two concrete functions.
The first one is
\begin{equation}
\begin{aligned}
 u_1(x) &:=  r^{2/3} \sin\left(\tfrac{2}{3} \theta\right),
\\
 &x = r(\cos\theta,\sin\theta) \in \Omega_1 := \opnint{{-\tfrac{1}{2}}}{\tfrac{1}{2}}^2 \setminus \left(\clsint{0}{\tfrac{1}{2}}\times\clsint{{-\tfrac{1}{2}}}{0}\right)
\end{aligned}
\end{equation}
and models a point singularity of the solution of a partial differential equation, which can be triggered by a reentrant corner in the domain boundary. Figure~\ref{F:u1} shows the graph of $u_1$ and compares the convergence histories of Algorithm~\ref{A:conf-tree-algo} and \ref{A:tree-algo}. 
\begin{figure}
\centering
\hfill
\includegraphics[width=0.4\hsize]{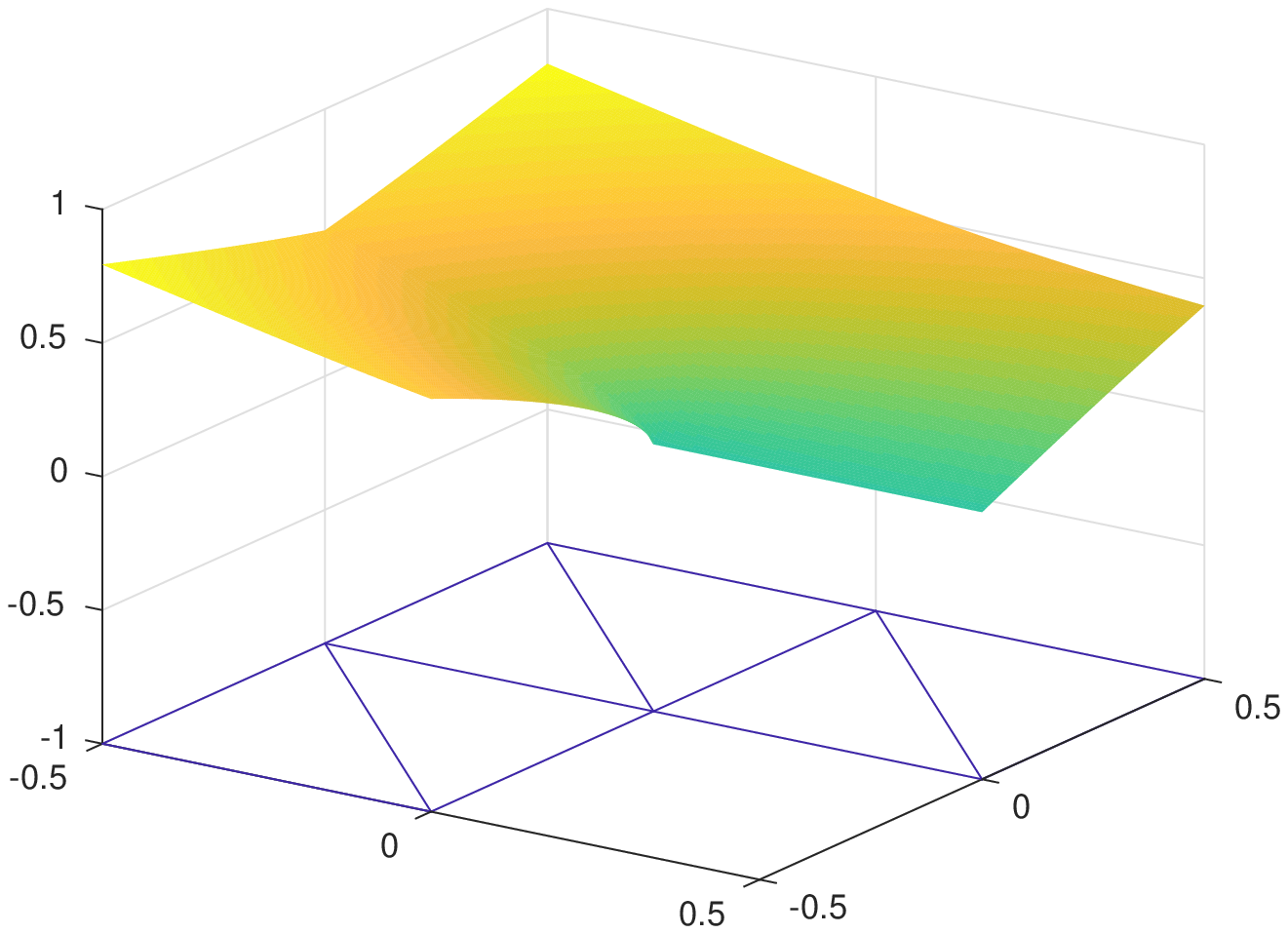}
\hfill
\includegraphics[width=0.4\hsize]{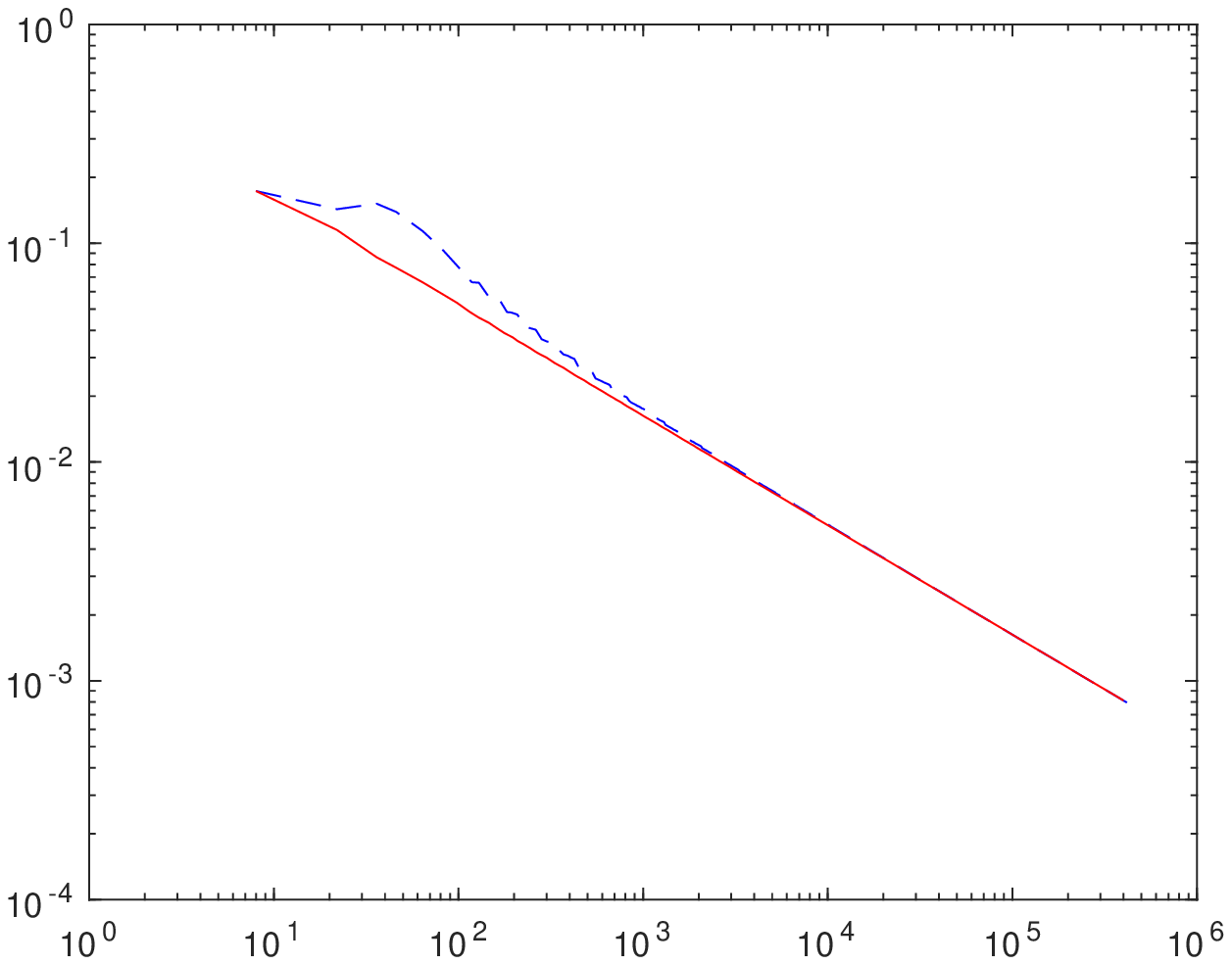}
\hfill
\caption{Approximating $u_1$: (left) graph of $u_1$ with initial triangulation and (right) global $H^1$-error versus cardinality of triangulation for Algorithms \ref{A:conf-tree-algo} (continuous red line) and \ref{A:tree-algo} (dashed blue line) in log-log scale.}
\label{F:u1}
\end{figure}
The lines interpolate the global errors of every 10th iteration. We observe that Algorithm~\ref{A:conf-tree-algo} immediately picks up the asymptotic speed
$-1/2$ and thus has slightly better approximation properties for triangulations with small cardinality. However, for triangulations with large cardinality the approximation properties of both algorithms essentially coincide.

The second target function is
\begin{equation}
 u_2(x)
 := 
 \max \left\{ 0, \tfrac{1}{9} - |x|^2 \right\}, 
\quad
 x = (x_1,x_2) \in \Omega_2 := \opnint{{-1}}{1}^2,
\end{equation}
which is quadratic in the ball $\omega_2 := \{ x\in \R^2 \mid |x| < \tfrac{1}{3} \}$ and vanishes elsewhere. This function is a ``partially trivial target''. In fact, $u_2$ could be approximated by subdividing only cells intersecting $\omega_2$, but the requirement of edge-to-edge conformity entails that refinement spreads out a little to the complement of $\omega_2$. Functions of this type arise, e.g., as solutions of suitable elliptic obstacle problems. Figure \ref{F:u2} shows for this example again graph, initial triangulation, and convergence histories. Here we see that Algorithm~\ref{A:conf-tree-algo} has better approximation properties than Algorithm~\ref{A:tree-algo} persistently.

%
\begin{figure}
\centering
\hfill
\includegraphics[width=0.4\hsize]{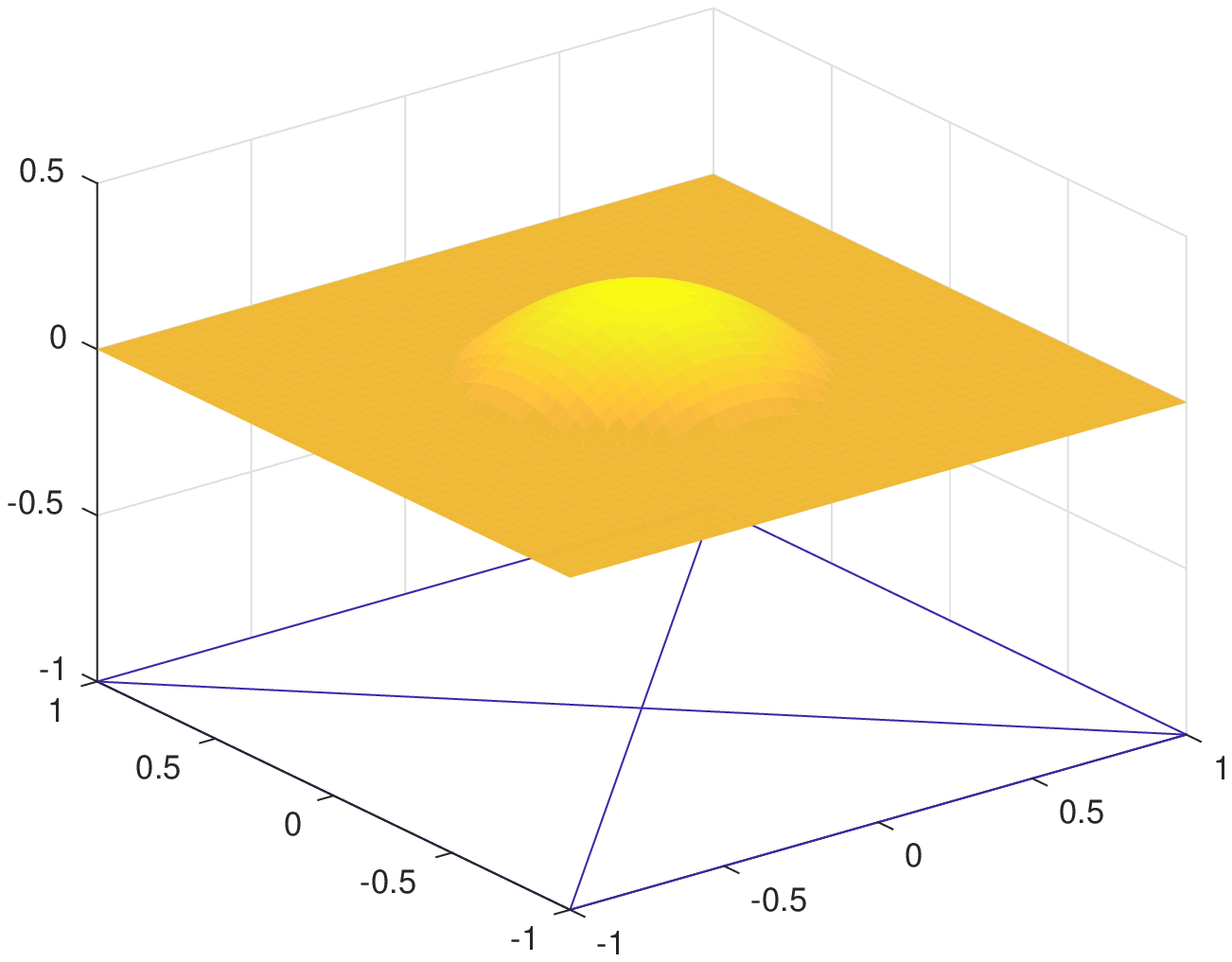}
\hfill
\includegraphics[width=0.4\hsize]{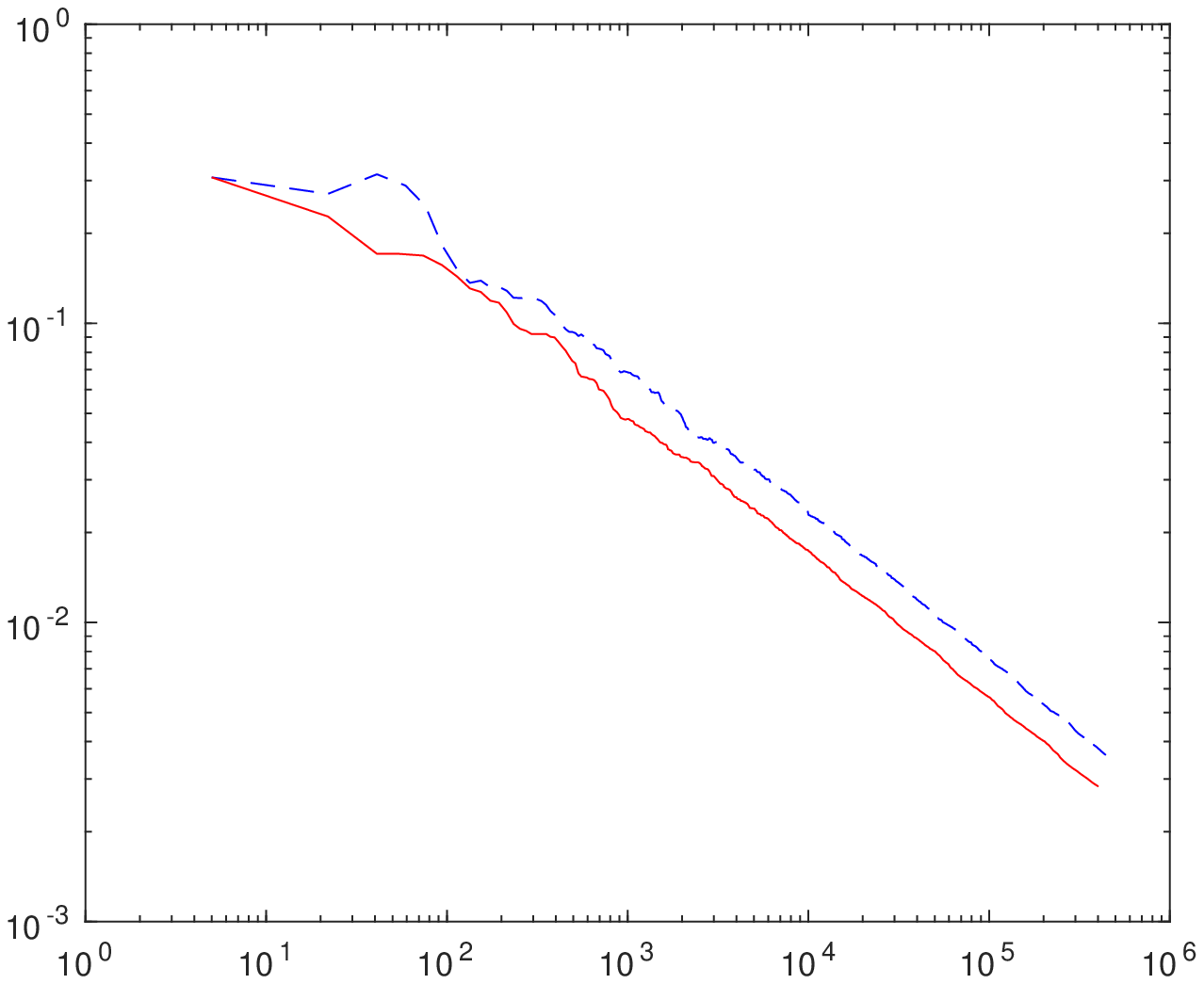}
\hfill
\caption{Approximating $u_2$: (left) graph of $u_2$ with initial triangulation and (right) global $H^1$-error versus cardinality of triangulation for Algorithms \ref{A:conf-tree-algo} (continuous red line) and \ref{A:tree-algo} (dashed blue line) in log-log scale.}
\label{F:u2}
\end{figure}
%

%
%
%
%
%

%
\subsection*{Acknowledgment}
This research has been supported by NSF-DMS~1720297, Italian PRIN 2017 NA-FROM-PDEs, and the Italian GNCS.

We would like to thank Alfred Schmidt for leaving us some source code that was useful for implementing Algorithm \ref{A:conf-tree-algo} within the finite element toolbox \textsf{ALBERTA}.
\bibliographystyle{siam}
\bibliography{ms}

\begin{thebibliography}{10}

\bibitem{binev_tree_2018}
{\scshape P.~Binev}, {\em Tree {Approximation} for hp-{Adaptivity}}, SIAM
  Journal on Numerical Analysis, 56 (2018), pp.~3346--3357.

\bibitem{binev_adaptive_2004}
{\scshape P.~Binev, W.~Dahmen, and R.~DeVore}, {\em Adaptive {Finite} {Element}
  {Methods} with convergence rates}, Numerische Mathematik, 97 (2004),
  pp.~219--268.

\bibitem{binev_fast_2004}
{\scshape P.~Binev and R.~DeVore}, {\em Fast computation in adaptive tree
  approximation}, Numerische Mathematik, 97 (2004), pp.~193--217.

\bibitem{brenner_mathematical_2008}
{\scshape S.~C. Brenner and R.~Scott}, {\em The {Mathematical} {Theory} of
  {Finite} {Element} {Methods}}, Texts in {Applied} {Mathematics},
  Springer-Verlag, New York, 3~ed., 2008.

\bibitem{demlow_convergence_2011}
{\scshape A.~Demlow and R.~Stevenson}, {\em Convergence and quasi-optimality of
  an adaptive finite element method for controlling {L}2 errors}, Numerische
  Mathematik, 117 (2011), pp.~185--218.

\bibitem{mitchell_30_2016}
{\scshape W.~F. Mitchell}, {\em 30 years of newest vertex bisection}, Rhodes,
  Greece, 2016, p.~020011.

\bibitem{schmidt_design_2005}
{\scshape A.~Schmidt and K.~G. Siebert}, {\em Design of {Adaptive} {Finite}
  {Element} {Software}: {The} {Finite} {Element} {Toolbox} {ALBERTA}}, Lecture
  {Notes} in {Computational} {Science} and {Engineering}, Springer-Verlag,
  Berlin Heidelberg, 2005.

\bibitem{stevenson_completion_2008}
{\scshape R.~Stevenson}, {\em The completion of locally refined simplicial
  partitions created by bisection}, Mathematics of Computation, 77 (2008),
  pp.~227--241.

\bibitem{veeser_approximating_2016}
{\scshape A.~Veeser}, {\em Approximating {Gradients} with {Continuous}
  {Piecewise} {Polynomial} {Functions}}, Foundations of Computational
  Mathematics, 16 (2016), pp.~723--750.

\bibitem{veeser_adaptive_2018}
\leavevmode\vrule height 2pt depth -1.6pt width 23pt, {\em Adaptive {Tree}
  {Approximation} with {Finite} {Element} {Functions}: {A} {First} {Look}}, in
  Numerical {Methods} for {PDEs}: {State} of the {Art} {Techniques}, D.~A.
  Di~Pietro, A.~Ern, and L.~Formaggia, eds., Springer International Publishing,
  Cham, 2018, pp.~249--284.

\bibitem{veeser_quasi-optimal_boundary_2019}
{\scshape A.~Veeser and P.~Zanotti}, {\em Quasi-{Optimal} {Nonconforming}
  {Methods} for {Second}-{Order} {Problems} on {Domains} with {Non}-{Lipschitz}
  {Boundary}}, in Numerical {Mathematics} and {Advanced} {Applications}
  {ENUMATH} 2017, F.~A. Radu, K.~Kumar, I.~Berre, J.~M. Nordbotten, and I.~S.
  Pop, eds., Lecture {Notes} in {Computational} {Science} and {Engineering},
  Springer International Publishing, 2019, pp.~461--469.

\bibitem{veeser_quasi-optimal_over_2019}
{\scshape A.~Veeser and P.~Zanotti}, {\em Quasi-{Optimal} {Nonconforming}
  {Methods} for {Symmetric} {Elliptic} {Problems}. {II}---{Overconsistency} and
  {Classical} {Nonconforming} {Elements}}, SIAM Journal on Numerical Analysis,
  (2019), pp.~266--292.

\end{thebibliography}

\end{document}